%%%%%%%%%%%%%%%%%%%%%%%%%%%%%%%%%%%%%%%%%%%%%%%%%%%%%%%%%%%%%%%%%%%%%%%%%%
%
% File:         np-lacunary.tex
% Title:		    On approximations by trigonometric polynomials
%			          of classes of functions
%				        defined by moduli of smoothness
% Comment:		  Article
% Author:		    Mimete Sh.~Berisha, Faton M.~Berisha,
%				        Mikhail K.~Potapov, Marjan Dema
% Date:			    June 2016
% Published:	  Abstract and Applied Analysis
%               Volume 2017 (2017), Article ID 9323181, 11 pp,
%				        New York
% Published:    arXiv:
% TeX Version:  \amslatex\ (MiKTeX)
%
%%%%%%%%%%%%%%%%%%%%%%%%%%%%%%%%%%%%%%%%%%%%%%%%%%%%%%%%%%%%%%%%%%%%%%%%%%

\documentclass[a4paper,reqno]{amsart}

\PII{}
\makeatletter
\def\@setcopyright{\@empty}
\makeatother

\newcommand{\Lp}{L_p[0,2\pi]}
\newcommand{\allp}{1<p<\infty}
\newcommand\seq[2]{\{#1_{#2}\}_{#2=1}^\infty}
\newcommand{\En}[1][n]{E_{#1}(f)_p}
\newcommand{\w}{\omega_k(f,t)_p}
\newcommand{\wpar}[1]{\omega_k\left(f,#1\right)_p}
\newcommand{\Np}{N(p,\theta,r,\lambda,\varphi)}

\theoremstyle{plain}
\newtheorem{theorem}{Theorem}[section]
\newtheorem{lemma}{Lemma}[section]
\newtheorem{corollary}{Corollary}[section]

\theoremstyle{remark}
\newtheorem{example}{Example}[section]
\newtheorem{remark}{Remark}[section]

\newcounter{tempenumi}

\newcounter{const}[section]
\numberwithin{const}{theorem}
\numberwithin{const}{lemma}
\numberwithin{const}{corollary}
\numberwithin{const}{example}

\makeatletter
\newcommand{\Cn}[1][]{%
  \stepcounter{const}C_{\theconst}%
  \@ifnotempty{#1}{\newcounter{#1}\setcounter{#1}{\arabic{const}}}}
\makeatother

\newcommand{\lastC}{C_{\theconst}}
\newcommand{\prevC}[1][1]{%
	{\countdef\n=255
	 \n=\theconst
	 \advance\n by-#1
	 C_{\number\n}}}

\renewcommand{\theconst}{\arabic{const}}

\begin{document}

\title[On approximations by trigonometric polynomials\dots]{%
	On approximations by trigonometric polynomials
	of classes of functions
	defined by moduli of smoothness%
}
\author[N.~Sh.\ Berisha]{Nimete Sh.~Berisha}
\address{Nimete Sh.~Berisha\\
	Faculty of Economics\\
	University of Prishtina\\
	N\"ena Terez\"e~5\\
	Prishtina\\
	Kosovo%
}
\email{nimete.berisha@uni-pr.edu}
\author[F.~M.\ Berisha]{Faton M.~Berisha}
\address{Faton M.~Berisha\\
	Faculty of Mathematics and Sciences\\
	University of Prishtina%
}
\email{faton.berisha@uni-pr.edu}
\author[M.~K.\ Potapov]{Mikhail~K.\ Potapov}
\address{Mikhail~K.\ Potapov\\
	Department of Mechanics and Mathematics\\
	Moscow State University\\
	Moscow 117234\\
	Russia}
\email{mkpotapov@mail.ru}
\author[M.~Dema]{Marjan Dema}
\address{Marjan Dema\\
	Faculty of Electrical and Computer Engineering\\
	University of Prishtina}
\email{marjan.dema@uni-pr.edu}

\keywords{%
	Fourier coefficients, modulus of smoothness,
	Nikol'ski\u{\i}, Besov,
	best approximations,
	trigonometric polynomials%
}
\subjclass{42A10, 42A16.}
\date{}

\begin{abstract}
	In this paper,
	we give a characterization
	of Nikol'ski\u{\i}-Besov type classes of functions,
	given by integral representations of moduli of smoothness,
	in terms of series over the moduli of smoothness.
	Also,
	necessary and sufficient conditions
	in terms of monotone or lacunary Fourier coefficients
	for a function to belong to a such a class
	are given.
	In order to prove our results,
	we make use of certain recent
	reverse Copson- and Leindler-type inequalities.
\end{abstract}

\maketitle

\section{Introduction}

Let
\footnote{%
	The authors declare that there is no conflict of interest
	regarding the publication of this paper.%
}
$f\in\Lp$, $\allp$, be a $2\pi$-periodic function.
We say that the function~$f$ has monotone Fourier coefficients if
it has a cosine Fourier series with
\begin{displaymath}
	f(x)\sim\sum_{n=0}^\infty a_n\cos nx, \quad a_n\downarrow0.
\end{displaymath}

We say that the function~$f$ has lacunary Fourier coefficients if
\begin{displaymath}
  f(x)\sim\sum_{\nu=1}^\infty\lambda_\nu\cos\nu x,
\end{displaymath}
where
\begin{displaymath}
	\lambda_\nu=
	\begin{cases}
  	a_\mu\ge0 &\text{for $\nu=2^\mu$},\\
  	0			 &\text{for $\nu\ne2^\mu$},
	\end{cases}
\end{displaymath}
that is
\begin{displaymath}
	f(x)\sim\sum_{\mu=0}^\infty a_\mu\cos2^\mu x, \quad a_\mu\ge0.
\end{displaymath}

By~$\w$ we denote the modulus of smoothness of order~$k$
in~$L_p$ metrics of a function $f\in L_p$, $\allp$:
\begin{displaymath}
	\w=\sup_{|h|\le t}\|\Delta_h^k f\|_p,
\end{displaymath}
where
\begin{displaymath}
	\Delta_h^k f(x)=\sum_{\nu=0}^k(-1)^{k-\nu}\binom k\nu f(x+\nu h)
\end{displaymath}
is the $k$-th order shift operator.

By~$\En$ we denote the best approximation in~$L_p$ metrics of a
function $f\in L_p$, $\allp$, by means of trigonometric polynomials
whose degree is not greater than $n-1$, i.e.
\begin{displaymath}
	\En=\inf_{T_{n-1}}\|f-T_{n-1}\|_p,
\end{displaymath}
where
$T_{n-1}=\sum_{\nu=0}^{n-1}(\alpha_\nu\cos\nu x+\beta_\nu\sin\nu x)$,
$\alpha_\nu$ and~$\beta_\nu$ are arbitrary real numbers.

We say that a $2\pi$-periodic function~$f$
belongs to the Nikol'ski\u{\i}-Besov class $\Np$, $\allp$,
if the following conditions are satisfied
\begin{enumerate}
	\item $f\in\Lp$;
	\item\label{it:parameters}Numbers~$\theta$, $r$,~$\lambda$
	  belong to the interval $(0,\infty)$, and~$k$ is an integer
	  satisfying $k>r+\lambda$;
	\item The following inequality holds true
		\begin{displaymath}
			\biggl(
				\int_0^\delta t^{-r\theta-1}\w^\theta\,dt
				+\delta^{\lambda\theta}\int_\delta^1
					t^{-(r+\lambda)\theta-1}\w^\theta\,dt
			\biggr)^{1/\theta}
			\le C\varphi(\delta),
		\end{displaymath}
	\setcounter{tempenumi}{\theenumi}
\end{enumerate}
while the function~$\varphi$ satisfies the conditions
\begin{enumerate}
	\setcounter{enumi}{\thetempenumi}
	\item\label{it:phi-continuous}$\varphi$~is a non-negative
	  continuous function on $(0,1)$ and $\varphi\ne0$;
	\item For every~$\delta_1$, $\delta_2$ such that
		$0\le\delta_1\le\delta_2\le1$ holds
		$\varphi(\delta_1)\le\Cn\varphi(\delta_2)$;
	\item\label{it:phi-2}For every~$\delta$ such that
	  $0\le\delta\le\frac12$
		holds $\varphi(2\delta)\le\Cn\varphi(\delta)$,
\end{enumerate}
where constants%
\footnote{Without mentioning it explicitly,
	we will consider all the constants positive.%
}%
~$C$, $\prevC$ and~$\lastC$
do not depend on~$\delta_1$, $\delta_2$ and~$\delta$.

A more detailed approach to the classes $\Np$
is given in~\cite{lakovic:mat-87} and~\cite{tikhonov:etna-05}
(see also~\cite{besov-i-n:integral}%
	%~\cite[p.~298]{besov-i-n:integralnye}%
).
In the paper,
we give a characterization of $\Np$
classes of functions
in terms of series over their moduli of smoothness.
Then we give the necessary and sufficient conditions
in terms of monotone or lacunary Fourier coefficients
for a function $f\in\Lp$ to belong to a class $\Np$.
In the process of proving the results,
we make use of certain recent
reverse $l_p$-type inequalities~\cite{p-berisha-b-k:mia-15},
closely related to Copson's	and Leindler's inequalities.

Finally, by making use of our results,
we construct an example of a function having a lacunary Fourier series,
which shows that $\Np$ classes are properly embedded
between the appropriate Nikol'ski\u{\i} classes
and Besov classes.

\section{Statement of Results}

Now we formulate our results.

\begin{theorem}\label{th:Np-w}
	A function~$f$ belongs to the class $\Np$
	if and only if%
	\footnote{Here and below we assume that the parameters~$\theta$,
		$r$, $\lambda$ and~$k$
		satisfy the condition~\ref{it:parameters},
		and the function~$\varphi$ satisfies the conditions
		\ref{it:phi-continuous}--\ref{it:phi-2}
		of the definition of the class $\Np$.}
	\begin{multline}\label{eq:Np-w}
		\biggl(
			\sum_{\nu=n+1}^\infty \wpar{\frac1\nu}^\theta \nu^{r\theta-1}
			+n^{-\lambda\theta}\sum_{\nu=1}^n
				\wpar{\frac1\nu}^\theta \nu^{(r+\lambda)\theta-1}
		\biggr)^{1/\theta}\\
		\le C\varphi\left(\frac1n\right),
	\end{multline}
	where constant~$C$ does not depend on~$n$.
\end{theorem}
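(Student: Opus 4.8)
The plan is to reduce both implications to a single two-sided estimate and then to read off the two directions from it. Write
\begin{displaymath}
  I(\delta)=\int_0^\delta t^{-r\theta-1}\w^\theta\,dt
  +\delta^{\lambda\theta}\int_\delta^1 t^{-(r+\lambda)\theta-1}\w^\theta\,dt ,
\end{displaymath}
so that (under the standing assumptions on the parameters and on~$\varphi$) the relation $f\in\Np$ amounts to $I(\delta)^{1/\theta}\le C\varphi(\delta)$ for all $\delta\in(0,1)$, and let $S(n)$ denote the expression inside the outer parentheses on the left of \eqref{eq:Np-w}, so that \eqref{eq:Np-w} reads $S(n)^{1/\theta}\le C\varphi(1/n)$. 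The heart of the matter is the equivalence
\begin{displaymath}
  I(1/n)\;\asymp\;S(n)\qquad(n=2,3,\dots),
\end{displaymath}
with constants independent of~$n$. Granting it, the ``only if'' part follows by specializing $\delta$ to $1/n$, and the ``if'' part by estimating $I(\delta)$, for a general $\delta\in(0,1)$, in terms of $S(\lfloor1/\delta\rfloor)$ and then comparing $\varphi(1/\lfloor1/\delta\rfloor)$ with $\varphi(\delta)$.

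To prove $I(1/n)\asymp S(n)$ I would cut each of the two integrals over the blocks $[\frac1{\nu+1},\frac1\nu]$, $\nu=1,2,\dots$, and estimate one block. There $\w$ is non-decreasing, hence lies between $\wpar{\frac1{\nu+1}}$ and $\wpar{\frac1\nu}$; the weight $t^{-\sigma-1}$, with $\sigma=r\theta>0$ or $\sigma=(r+\lambda)\theta>0$, lies between $\nu^{\sigma+1}$ and $(\nu+1)^{\sigma+1}$; and the block has length $\frac1{\nu(\nu+1)}$. Multiplying and using $\nu+1\le2\nu$ gives
\begin{displaymath}
  2^{-\sigma}(\nu+1)^{\sigma-1}\wpar{\frac1{\nu+1}}^\theta
  \le\int_{1/(\nu+1)}^{1/\nu} t^{-\sigma-1}\w^\theta\,dt\le
  2^{\sigma}\nu^{\sigma-1}\wpar{\frac1\nu}^\theta .
\end{displaymath}
Summing over $\nu\ge n$ for $\sigma=r\theta$ (and re-indexing the lower bound) recovers $\int_0^{1/n}t^{-r\theta-1}\w^\theta\,dt\asymp\sum_{\nu\ge n}\nu^{r\theta-1}\wpar{\frac1\nu}^\theta$; the only term here not already belonging to the first sum of $S(n)$ is the ``edge'' term $\nu=n$, namely $n^{-\lambda\theta}n^{(r+\lambda)\theta-1}\wpar{\frac1n}^\theta$, which is a summand of the second sum of $S(n)$ and hence harmless. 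Summing over $1\le\nu\le n-1$ for $\sigma=(r+\lambda)\theta$ and multiplying by $n^{-\lambda\theta}$ recovers, up to constants, the second sum of $S(n)$; the only discrepancy is the missing ``edge'' term $\nu=1$, i.e.\ $n^{-\lambda\theta}\wpar1^\theta$, which I would dominate by the $\nu=2$ summand using the elementary inequality $\wpar1\le2^k\wpar{\frac12}$. Combining the two halves gives $I(1/n)\asymp S(n)$ for $n\ge2$ (the upper estimate in fact holding for every $n\ge1$).

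Granting the equivalence, the ``only if'' direction is immediate: if $f\in\Np$, then setting $\delta=1/n$ in the definition gives $S(n)^{1/\theta}\le C\,I(1/n)^{1/\theta}\le C\varphi(1/n)$, which is \eqref{eq:Np-w}. For the converse, assume \eqref{eq:Np-w}, fix $\delta\in(0,1)$, and put $n=\lfloor1/\delta\rfloor$, so that $\frac1{n+1}<\delta\le\frac1n$. Since the integrands are non-negative, $\delta\le\frac1n$, and $\delta>\frac1{n+1}$, one has $\int_0^\delta t^{-r\theta-1}\w^\theta\,dt\le\int_0^{1/n}t^{-r\theta-1}\w^\theta\,dt$ and $\delta^{\lambda\theta}\int_\delta^1 t^{-(r+\lambda)\theta-1}\w^\theta\,dt\le n^{-\lambda\theta}\int_{1/(n+1)}^1 t^{-(r+\lambda)\theta-1}\w^\theta\,dt$; the block estimates of the previous paragraph bound both right-hand sides by $C\,S(n)$, so $I(\delta)\le C\,S(n)\le C\varphi(1/n)^\theta$ by \eqref{eq:Np-w}. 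Finally $\varphi(1/n)\le C\varphi(\delta)$: from $\delta\le\frac1n<2\delta$ the condition $\varphi(\delta_1)\le C\varphi(\delta_2)$ ($\delta_1\le\delta_2$) gives $\varphi(\frac1n)\le C\varphi(2\delta)$, and the condition $\varphi(2\delta)\le C\varphi(\delta)$ ($\delta\le\frac12$) completes the estimate --- all of this for $\delta$ bounded away from~$1$; the remaining values of~$\delta$ (equivalently, the finitely many small~$n$) are trivial, since there $\varphi$ is bounded below by a positive constant and $I(\delta)$ bounded above by a constant. Taking $\theta$-th roots yields $I(\delta)^{1/\theta}\le C\varphi(\delta)$, i.e.\ $f\in\Np$.

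The step I expect to be the actual work is the endpoint bookkeeping rather than anything conceptual: one has to check that each ``edge'' summand thrown off by the discretization --- the $\nu=n$ term of the first sum and the $\nu=1$ term of the second --- is absorbed, the former by the complementary half of $S(n)$ and the latter by the doubling property of the modulus of smoothness; and one has to check that the comparison $\varphi(1/n)\asymp\varphi(\delta)$ for $\delta\asymp1/n$ is genuinely covered by the monotonicity and doubling conditions imposed on~$\varphi$. Everything else is the routine comparison of a monotone integrand with its values on the grid points $1/\nu$.
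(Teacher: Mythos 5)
Your proposal is correct and follows essentially the same route as the paper: both discretize the two integrals over the blocks $\left[\frac1{\nu+1},\frac1\nu\right]$ using the monotonicity of $\w$, obtain the two-sided comparison with the sums in \eqref{eq:Np-w}, and pass between $\delta$ and $\frac1n$ via the monotonicity and doubling conditions on~$\varphi$. The only difference is cosmetic bookkeeping: the paper cuts at $\frac1{n+1}$ so the discrete sums align exactly with the index ranges in \eqref{eq:Np-w}, whereas you cut at $\frac1n$ and absorb the two edge terms, which works just as well.
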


\begin{theorem}\label{th:Np-monotone}
	For a function $f\in\Lp$, $\allp$,
	such that
	\begin{equation}\label{eq:f}
		f(x)\sim\sum_{\nu=1}^\infty a_\nu\cos\nu x, \quad a_\nu\downarrow0,
	\end{equation}
	to belong to the class $\Np$
	it is necessary and sufficient
	that its Fourier coefficients satisfy the condition
	\begin{displaymath}
		\biggl(
			\sum_{\nu=n+1}^\infty a_\nu^\theta \nu^{r\theta+\theta-\theta/p-1}
			+n^{-\lambda\theta}\sum_{\nu=1}^n a_\nu^\theta
				\nu^{r\theta+\lambda\theta+\theta-\theta/p-1}
		\biggr)^{1/\theta}
		\le C\varphi\left(\frac 1n\right),
	\end{displaymath}
	where constant~$C$ does not depend on~$n$.
\end{theorem}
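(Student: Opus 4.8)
The plan is to deduce Theorem~\ref{th:Np-monotone} from Theorem~\ref{th:Np-w}. Let $A_n$ denote the $\theta$-th power of the left-hand side of~\eqref{eq:Np-w} and put
\begin{equation*}
  B_n:=\sum_{\nu=n+1}^\infty a_\nu^\theta\,\nu^{r\theta+\theta-\theta/p-1}
    +n^{-\lambda\theta}\sum_{\nu=1}^n a_\nu^\theta\,\nu^{r\theta+\lambda\theta+\theta-\theta/p-1};
\end{equation*}
by Theorem~\ref{th:Np-w} it is then enough to show that $A_n\asymp B_n$ uniformly in~$n$ (the regularity of~$\varphi$ being used along the way). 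The bridge is the (known) two-sided estimate for the modulus of smoothness of a function with monotone Fourier coefficients,
\begin{equation*}
  \wpar{\frac1n}\asymp n^{-k}\biggl(\sum_{\nu=1}^n\nu^{kp+p-2}a_\nu^p\biggr)^{1/p}
   +\biggl(\sum_{\nu=n+1}^\infty\nu^{p-2}a_\nu^p\biggr)^{1/p},
\end{equation*}
which I would quote or establish as a preliminary lemma. Inserting it into~\eqref{eq:Np-w} and using $(x+y)^\theta\asymp x^\theta+y^\theta$ writes $A_n$ as a sum of four double sums $S_1,\dots,S_4$: $S_1,S_2$ come from the first sum of~\eqref{eq:Np-w}, $S_3,S_4$ from the second, while $S_1,S_3$ carry the tails $\sum_{\mu\ge\nu}\mu^{p-2}a_\mu^p$ and $S_2,S_4$ the partial sums $\nu^{-k\theta}\sum_{\mu\le\nu}\mu^{kp+p-2}a_\mu^p$. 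The outer $\nu$-weight of $S_2,S_4$ has negative exponent (since $k>r+\lambda$) and that of $S_1,S_3$ positive exponent.

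For the necessity half — that $B_n\le C\,\varphi(1/n)^\theta$ whenever $f\in\Np$ — I would retain only $S_1$ and $S_4$ and use the monotonicity of $\{a_\nu\}$ directly. In $S_4$, the bound $a_\mu\ge a_\nu$ for $\mu\le\nu$ gives $\sum_{\mu\le\nu}\mu^{kp+p-2}a_\mu^p\ge a_\nu^p\sum_{\mu\le\nu}\mu^{kp+p-2}\asymp a_\nu^p\,\nu^{kp+p-1}$, and, after raising to the power $\theta/p$ and collecting powers of~$\nu$, this reproduces the second sum of $B_n$. In $S_1$, the block estimate $\sum_{\mu\ge\nu}\mu^{p-2}a_\mu^p\ge a_{2\nu}^p\sum_{\nu\le\mu<2\nu}\mu^{p-2}\asymp a_{2\nu}^p\,\nu^{p-1}$ produces $\sum_{\nu>n}\nu^{r\theta+\theta-\theta/p-1}a_{2\nu}^\theta$; the substitution $m=2\nu$ together with $a_m\le a_{m-1}$ (to pass from even indices to all indices) shows that $S_1$, run with~$n$ replaced by $\lfloor n/2\rfloor$, dominates the first sum of $B_n$. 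Since $f\in\Np$ yields $A_m\le C\,\varphi(1/m)^\theta$ for every~$m$ (Theorem~\ref{th:Np-w}) and $\varphi(1/\lfloor n/2\rfloor)\le C\,\varphi(1/n)$ by conditions~\ref{it:phi-continuous}--\ref{it:phi-2}, this gives the required bound.

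For the sufficiency half I would prove the opposite comparison $S_1+S_2+S_3+S_4\le C\,B_n$ and conclude again via Theorem~\ref{th:Np-w}. In $S_2$ and $S_3$ I would first split the inner summation at the index~$n$ — writing $\sum_{\mu\le\nu}=\sum_{\mu\le n}+\sum_{n<\mu\le\nu}$ in $S_2$ and dually in $S_3$ — so that each becomes a genuinely one-sided double sum plus a term whose inner sum runs over a $\nu$-independent range. To every resulting piece, and to $S_1$ and $S_4$ directly, I would apply a Copson- or Leindler-type inequality when $p\le\theta$ and its \emph{reverse} version from~\cite{p-berisha-b-k:mia-15} when $0<\theta<p$ — the latter being legitimate precisely because $\{a_\nu\}$ is non-increasing. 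Each such application replaces the inner partial sum or tail by its $\nu$-th term times the suitable power of~$\nu$ coming from the outer weight, and a short manipulation of the exponents — using $(p-1)\theta/p=\theta-\theta/p$ and $(kp+p-1)\theta/p=k\theta+\theta-\theta/p$ — collapses each double sum into $\sum_\nu\nu^{(\,\cdot\,)}a_\nu^\theta$ with exactly the weight occurring in $B_n$. Summing the four estimates gives $A_n\le C\,B_n$, whence, by Theorem~\ref{th:Np-w}, the hypothesis $B_n\le C\,\varphi(1/n)^\theta$ forces $f\in\Np$.

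The step I expect to be the main obstacle is the sufficiency half, and within it the range $0<\theta<p$, where the classical Copson and Leindler inequalities do not apply and one must instead invoke the sharp reverse inequalities of~\cite{p-berisha-b-k:mia-15}, whose hypotheses have to be verified for each of the several one-sided double sums produced — this is where the monotonicity of the coefficients is indispensable. A secondary nuisance is the careful bookkeeping of the boundary terms created by splitting the inner sums at the index~$n$, so that each is correctly absorbed into one of the two sums defining $B_n$.
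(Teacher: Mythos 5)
Your proposal follows essentially the same route as the paper's proof: reduce everything to Theorem~\ref{th:Np-w}, insert the two-sided estimate for $\wpar{\frac1n}$ of a series with monotone coefficients (this is exactly the paper's Lemma~\ref{lm:anu-w}, quoted from \cite{potapov-b:publ-79}), split the resulting expression into the same four double sums, and prove a two-sided comparison with the coefficient expression by means of the direct Hardy--Copson--Leindler inequalities when $\theta\ge p$ and the reverse inequalities of \cite{p-berisha-b-k:mia-15} when $\theta<p$ — the same division of parameter ranges the paper uses via Lemmas~\ref{lm:lp}, \ref{lm:lp-converse} and~\ref{lm:lp-asymp}. The one real deviation is in the necessity half: you lower-bound only $S_1$ and $S_4$ by direct monotone block estimates and then shift $n\mapsto\lfloor n/2\rfloor$ using the doubling property of~$\varphi$, whereas the paper obtains lower bounds from the lemmas themselves, which produces tails starting at $8(n+1)$ and forces an extra monotonicity argument to restore the range $\nu\ge n+1$; your variant is a bit more elementary and avoids that patching, at the harmless cost of invoking the regularity of~$\varphi$ once more. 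Do note, however, that what you file under ``bookkeeping'' in the sufficiency half is where the paper spends most of its effort: the boundary piece $n^{(r-k)\theta}\bigl(\sum_{\mu\le n}a_\mu^p\mu^{(k+1)p-2}\bigr)^{\theta/p}$ is absorbed into the second sum of the target expression only after a halving trick (by monotonicity, $\sum_{\mu\le n}\le C\sum_{\mu\le[n/2]}$) followed by Lemma~\ref{lm:lp-asymp}; the piece $n^{(r+\lambda)\theta}\bigl(\sum_{\mu>n}a_\mu^p\mu^{p-2}\bigr)^{\theta/p}$ is not of Copson--Leindler form at all, since the outer sum has collapsed, and the paper treats it separately by H\"older's inequality for $\theta>p$ and by Jensen's inequality over dyadic blocks for $\theta\le p$; and the reverse inequalities themselves return tails starting near $[(n+1)/4]$, whose initial block must be pushed into the $n^{-\lambda\theta}$-sum by monotonicity (the paper's $A_3$, $A_4$ step). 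All of these fit inside your framework (the collapsed factor can, for instance, be re-expanded as a sum over the block $n/2<\nu\le n$ and fed back into the same machinery), so there is no gap in principle, but they are genuine arguments that need to be written out rather than absorbed silently.
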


Regarding Theorem~\ref{th:Np-w},
a very interesting open question remains
its analogue for functions with general monotone Fourier coefficients,
generalized in the sense
of~\cite{tikhonov:analysis-07,liflyand-t:math-nachr-11}.

\begin{corollary}
	Put $\varphi(\delta)=\delta^\alpha$, $0<\alpha<\lambda$,
	in the definition of the class $\Np$,
	we obtain~\cite{lakovic:mat-87}
	the Nikol'ski\u{\i} class~$H_p^{r+\alpha}$.
	Thus Theorems~\ref{th:Np-w} and~\ref{th:Np-monotone}
	give the single coefficient condition
	\begin{displaymath}
		a_\nu\le\frac C{\nu^{r+\alpha+1-\frac1p}},
	\end{displaymath}
	for $f\in H_p^{r+\alpha}$,
	given in~\cite{konyushkov:izv-57},
	%(see also~\cite{berisha:glas-81}),
	where the function~$f$ is given by~\eqref{eq:f}.
\end{corollary}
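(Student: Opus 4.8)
The plan is to specialize Theorem~\ref{th:Np-monotone} (and, for the part concerning moduli of smoothness, Theorem~\ref{th:Np-w}) to the weight $\varphi(\delta)=\delta^\alpha$ and then to show that the resulting series condition on the Fourier coefficients is equivalent to the single pointwise bound $a_\nu\le C\nu^{-(r+\alpha+1-1/p)}$.

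First I would verify that $\varphi(\delta)=\delta^\alpha$ is an admissible weight in the definition of $\Np$: it is non-negative, continuous and not identically zero on $(0,1)$; being increasing, it satisfies condition~(5) with constant $1$; and $\varphi(2\delta)=2^\alpha\varphi(\delta)$, so condition~(6) holds with constant $2^\alpha$. Since $0<\alpha<\lambda$, the admissible orders $k>r+\lambda$ also satisfy $k>r+\alpha$, so the smoothness index $r+\alpha$ is compatible with the order of the modulus, and by~\cite{lakovic:mat-87} the membership condition~(3) with $\varphi(\delta)=\delta^\alpha$ describes exactly the Nikol'ski\u{\i} class $H_p^{r+\alpha}$. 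Hence $f\in H_p^{r+\alpha}$ if and only if the hypothesis of Theorem~\ref{th:Np-monotone} holds with $\varphi(1/n)=n^{-\alpha}$.

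The core of the argument is then the equivalence, for $f$ given by~\eqref{eq:f}, of the inequality in Theorem~\ref{th:Np-monotone} with $\varphi(1/n)=n^{-\alpha}$ and the bound $a_\nu\le C\nu^{-(r+\alpha+1-1/p)}$. For sufficiency I would substitute this bound: the tail sum turns into a multiple of $\sum_{\nu>n}\nu^{-\alpha\theta-1}$, which converges because $\alpha\theta>0$ and is comparable to $n^{-\alpha\theta}$; the finite sum turns into a multiple of $n^{-\lambda\theta}\sum_{\nu\le n}\nu^{(\lambda-\alpha)\theta-1}$, which is comparable to $n^{-\lambda\theta}\cdot n^{(\lambda-\alpha)\theta}=n^{-\alpha\theta}$ since $(\lambda-\alpha)\theta>0$. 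Both terms are thus $O(n^{-\alpha\theta})$, which is the required estimate. For necessity I would use the monotonicity of $a_\nu$: because $r+\lambda+1-1/p>0$, the exponent $(r+\lambda+1-1/p)\theta-1$ exceeds $-1$, hence $\sum_{\nu=\lceil n/2\rceil}^{n}\nu^{(r+\lambda+1-1/p)\theta-1}\ge c\,n^{(r+\lambda+1-1/p)\theta}$, and on that range $a_\nu\ge a_n$; thus the finite term in Theorem~\ref{th:Np-monotone} alone is at least $c\,a_n^\theta n^{-\lambda\theta}n^{(r+\lambda+1-1/p)\theta}=c\,a_n^\theta n^{(r+1-1/p)\theta}$, and comparing with the right-hand side $C^\theta n^{-\alpha\theta}$ gives $a_n\le C n^{-(r+\alpha+1-1/p)}$. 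The specialization of Theorem~\ref{th:Np-w} to $\varphi(\delta)=\delta^\alpha$ is handled in the same way and recovers the corresponding description of $H_p^{r+\alpha}$ through $\omega_k(f,\cdot)_p$.

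There is no serious obstacle here; the one point requiring attention is the bookkeeping of the exponents in the elementary power sums --- one needs $\alpha\theta>0$ for convergence of the tail, $(\lambda-\alpha)\theta>0$ (that is, $\alpha<\lambda$) for the finite sum in the sufficiency direction to collapse to $n^{-\alpha\theta}$, and $(r+\lambda+1-1/p)\theta-1>-1$ for the lower bound in the necessity direction --- and all three follow from the standing assumptions $0<\alpha<\lambda$, $r,\lambda\in(0,\infty)$, $1<p<\infty$. The identification of $\Np$ with $H_p^{r+\alpha}$ for this weight, and the fact that the resulting single coefficient condition is the one given in~\cite{konyushkov:izv-57}, are then simply quoted.
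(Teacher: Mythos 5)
Your proposal is correct and follows the route the paper intends: the paper states this corollary without any argument, merely specializing Theorems~\ref{th:Np-w} and~\ref{th:Np-monotone} to $\varphi(\delta)=\delta^\alpha$ and citing the references for the identification of the class with $H_p^{r+\alpha}$ and for the coefficient condition. Your computation that the series condition with $\varphi(1/n)=n^{-\alpha}$ is equivalent to $a_\nu\le C\nu^{-(r+\alpha+1-1/p)}$ (sufficiency by direct substitution using $\alpha\theta>0$ and $(\lambda-\alpha)\theta>0$, necessity from the finite sum over $\nu\in[n/2,n]$ using monotonicity) is sound and simply supplies the elementary details the paper delegates to the citations.
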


\begin{corollary}
	If $\varphi(\delta)\ge C$,
	then we obtain~\cite{lakovic:mat-87}
	the Besov class~$B_p^{\theta r}$.
	Thus Theorems~\ref{th:Np-w} and~\ref{th:Np-monotone}
	give the necessary and sufficient condition
	\begin{displaymath}
		\sum_{\nu=1}^\infty a_\nu^\theta \nu^{r\theta+\theta-\theta/p-1}
		<\infty
	\end{displaymath}
	for $f\in B_p^{\theta r}$,
	given in~\cite{potapov-b:publ-79},
	%(see also~\cite{berisha:serdica-85}),
	where the function~$f$ is given by~\eqref{eq:f}.
\end{corollary}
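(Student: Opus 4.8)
The plan is to prove the two assertions of the corollary separately: (i) when $\varphi$ is (equivalent to) a positive constant, $\Np$ coincides with the Besov class $B_p^{\theta r}$; and (ii) for a function of the form~\eqref{eq:f}, the coefficient condition furnished by Theorem~\ref{th:Np-monotone} collapses in this case to $\sum_{\nu=1}^\infty a_\nu^\theta\nu^{r\theta+\theta-\theta/p-1}<\infty$. Combining (i), (ii) and Theorem~\ref{th:Np-monotone} then gives the claim.

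For step~(i), I would first observe that the hypothesis $\varphi(\delta)\ge C$, together with the standing hypotheses on $\varphi$, forces $\varphi$ to be equivalent to a positive constant: continuity bounds $\varphi$ on every compact subinterval of $(0,1)$, the almost-monotonicity condition $\varphi(\delta_1)\le C\varphi(\delta_2)$ propagates such a bound down to $(0,\frac12]$, and the doubling condition $\varphi(2\delta)\le C\varphi(\delta)$ combined with almost-monotonicity covers $(\frac12,1)$. With $\varphi$ constant, the integral condition in the definition of $\Np$ becomes
\begin{displaymath}
	\sup_{0<\delta<1}\biggl(
		\int_0^\delta t^{-r\theta-1}\w^\theta\,dt
		+\delta^{\lambda\theta}\int_\delta^1 t^{-(r+\lambda)\theta-1}\w^\theta\,dt
	\biggr)<\infty .
\end{displaymath}
Since $\lambda\theta>0$, for $t\ge\delta$ one has $t^{-\lambda\theta}\le\delta^{-\lambda\theta}$, so the term carrying $\lambda$ is at most $\int_\delta^1 t^{-r\theta-1}\w^\theta\,dt\le\int_0^1 t^{-r\theta-1}\w^\theta\,dt$; letting $\delta\to1$ in the first term shows conversely that the supremum is comparable to $\int_0^1 t^{-r\theta-1}\w^\theta\,dt$. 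Hence the condition reduces to $\int_0^1 t^{-r\theta-1}\w^\theta\,dt<\infty$, which (with $f\in\Lp$) is precisely membership in $B_p^{\theta r}$; this is the identification of~\cite{lakovic:mat-87}.

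For step~(ii), I would substitute a constant $\varphi$ into Theorem~\ref{th:Np-monotone}, whose right-hand side then no longer depends on $n$, so that $f\in\Np$ is equivalent to
\begin{displaymath}
	S:=\sup_{n\ge1}\biggl(
		\sum_{\nu=n+1}^\infty a_\nu^\theta\nu^{r\theta+\theta-\theta/p-1}
		+n^{-\lambda\theta}\sum_{\nu=1}^n a_\nu^\theta\nu^{r\theta+\lambda\theta+\theta-\theta/p-1}
	\biggr)<\infty .
\end{displaymath}
Taking $n=1$ here already bounds $\sum_{\nu=1}^\infty a_\nu^\theta\nu^{r\theta+\theta-\theta/p-1}$ by $S$; conversely, if that series equals $A<\infty$, then the first sum in $S$ is always $\le A$, and in the second sum the inequality $n^{-\lambda\theta}\nu^{\lambda\theta}\le1$ for $\nu\le n$ (valid since $\lambda\theta>0$) gives the bound $A$ too, whence $S\le2A$. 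Thus $S<\infty$ is equivalent to $\sum_{\nu=1}^\infty a_\nu^\theta\nu^{r\theta+\theta-\theta/p-1}<\infty$, and putting this together with step~(i) recovers the criterion of~\cite{potapov-b:publ-79} for $f$ given by~\eqref{eq:f}.

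I do not expect a genuine obstacle here; the corollary is essentially a consistency check of Theorems~\ref{th:Np-w} and~\ref{th:Np-monotone} against the classical results. The only step requiring a little care is the reduction in~(i)---showing that the auxiliary term carrying the parameter $\lambda$ is redundant once $\varphi$ is constant---but this amounts to the one-line monotonicity estimate $t^{-\lambda\theta}\le\delta^{-\lambda\theta}$ noted above, and the analogous elementary estimate reappears in~(ii).
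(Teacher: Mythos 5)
Your proof is correct and takes essentially the route the paper intends: the corollary is stated there without a separate proof, as an immediate specialization of Theorem~\ref{th:Np-monotone} combined with the identification of $\Np$ with $B_p^{\theta r}$ for $\varphi(\delta)\ge C$ cited from~\cite{lakovic:mat-87}. Your steps (i) and (ii) merely supply the routine details (boundedness of $\varphi$ from the almost-monotonicity and doubling conditions, and the elementary estimate $n^{-\lambda\theta}\nu^{\lambda\theta}\le1$ for $\nu\le n$), which are sound.
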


\begin{theorem}\label{th:Np-lacunary}
	For a function $f\in L_p$, $\allp$, such that
	\begin{equation}\label{eq:f-lacunary}
		f(x)\sim\sum_{\nu=1}^\infty\lambda_\nu\cos\nu x,
	\end{equation}
	and
	\begin{displaymath}
		\lambda_\nu=
		\begin{cases}
			a_\mu\ge0 &\text{for $\nu=2^\mu$},\\
			0			 &\text{for $\nu\ne2^\mu$},
		\end{cases}
	\end{displaymath}
	to belong to the class $\Np$ it is necessary and sufficient that its
	Fourier coefficients satisfy the condition%
	\footnote{Here and below we assume that the parameters~$\theta$, $r$,
		$\lambda$ and~$k$ satisfy the condition~\ref{it:parameters},
		and the function~$\varphi$ satisfies the conditions
		\ref{it:phi-continuous}--\ref{it:phi-2}.}
	\begin{displaymath}
		\biggl(
			\sum_{\nu=m+1}^\infty \lambda_\nu^\theta \nu^{r\theta}
			+m^{-\lambda\theta}
			  \sum_{\nu=1}^m \lambda_\nu^\theta \nu^{(r+\lambda)\theta}
		\biggr)^{1/\theta}
		\le C\varphi\left(\frac 1m\right),
	\end{displaymath}
	where constant~$C$ does not depend on~$m$.
\end{theorem}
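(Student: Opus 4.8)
The plan is to deduce Theorem~\ref{th:Np-lacunary} from Theorem~\ref{th:Np-w}, which we may assume, by computing the moduli of smoothness $\wpar{1/\nu}$ of a lacunary series explicitly in terms of its coefficients. The starting point is the classical theorem on lacunary series: for $\allp$, the $L_p$-norm of any trigonometric series whose nonzero frequencies lie in $\{2^\mu\}_{\mu\ge0}$ is equivalent, with constants depending only on~$p$, to the $\ell^2$-norm of its coefficients (this follows, for instance, from the Marcinkiewicz multiplier theorem). Applying it to $\Delta_h^kf$ and using $\Delta_h^k(a_\mu\cos2^\mu x)=a_\mu\bigl(2\sin(2^{\mu-1}h)\bigr)^k\cos(2^\mu x+\psi_\mu)$, I would get $\|\Delta_h^kf\|_p\asymp\bigl(\sum_\mu a_\mu^2|2\sin(2^{\mu-1}h)|^{2k}\bigr)^{1/2}$, and then, taking the supremum over $|h|\le t$ (for the lower bound, replacing that supremum by the average over $h\in[0,t]$, whose contribution is computed term by term),
\begin{displaymath}
	\wpar{t}\asymp\Bigl(\,\sum_\mu a_\mu^2\min(1,2^\mu t)^{2k}\Bigr)^{1/2},\qquad\allp.
\end{displaymath}

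Next, a routine dyadic-decomposition argument reduces the whole statement to the case $n=m=2^s$, $s=0,1,2,\dots$. For the left-hand side of~\eqref{eq:Np-w} one groups~$\nu$ into the blocks $[2^j,2^{j+1})$, on which $\wpar{1/\nu}\asymp\wpar{2^{-j}}$ by the doubling property $\wpar{2t}\le2^k\wpar{t}$, and uses $\sum_{\nu\in[2^j,2^{j+1})}\nu^{r\theta-1}\asymp2^{jr\theta}$ (recall $r\theta>0$); the corresponding sums over the coefficients are constant on each such block since $\lambda_\nu=0$ off the powers of~$2$; and $\varphi(1/n)\asymp\varphi(2^{-s})$ by conditions \ref{it:phi-continuous}--\ref{it:phi-2}. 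Hence, writing $b_j^2:=\sum_{\mu>j}a_\mu^2+\sum_{\mu\le j}2^{2k(\mu-j)}a_\mu^2$, so that $\wpar{2^{-j}}\asymp b_j$ by the previous display, it remains to show that
\begin{multline*}
	\sum_{j\ge s}b_j^\theta2^{jr\theta}
		+2^{-s\lambda\theta}\sum_{j=0}^{s}b_j^\theta2^{j(r+\lambda)\theta}\\
	\asymp\sum_{\mu>s}a_\mu^\theta2^{\mu r\theta}
		+2^{-s\lambda\theta}\sum_{\mu\le s}a_\mu^\theta2^{\mu(r+\lambda)\theta}
\end{multline*}
uniformly in~$s$.

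This last equivalence is where the main work lies, and I expect it to be the principal obstacle. Splitting $b_j$ into its ``tail'' part $\sum_{\mu>j}a_\mu^2$ and its ``head'' part $\sum_{\mu\le j}2^{2k(\mu-j)}a_\mu^2$ and interchanging the order of summation, each of the four resulting weighted double sums in the sequence $(a_\mu^2)$, with geometric weights, is of Copson- or Leindler-type; for the head parts the exponent balance that makes them converge and be two-sidedly comparable to the right-hand side is exactly the hypothesis $k>r+\lambda$ from~\ref{it:parameters}. The bound ``$\le$'' uses the direct Copson and Leindler inequalities, while ``$\ge$''---which is what yields the necessity half of the theorem---uses their reverse counterparts from~\cite{p-berisha-b-k:mia-15}. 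In addition one must check that the cross contribution coming from indices $\mu\le s\le j$ in the first sum is absorbed into the $2^{-s\lambda\theta}$-weighted sum on the right; this reduces to the elementary inequality $(s-\mu)(r+\lambda-k)\le0$, valid since $\mu\le s$ and $k>r+\lambda$. Assembling these estimates and undoing the dyadic reduction yields the asserted characterization. By comparison, the analytic inputs---the modulus computation above and the Copson--Leindler inequalities---are essentially off the shelf; the delicate part is the bookkeeping, namely keeping track of constants through the almost-monotonicity and doubling reductions and matching each of the four double sums to the appropriate direct or reverse inequality.
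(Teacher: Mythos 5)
Your route is genuinely different from the paper's and is essentially workable. The paper never goes through Theorem~\ref{th:Np-w} or moduli of smoothness at all: it invokes Lemma~\ref{lm:Np-E} to characterize $\Np$ by the best approximations $\En[2^\mu]$, replaces $\En[2^\nu]$ by $\bigl(\sum_{\mu\ge\nu}a_\mu^2\bigr)^{1/2}$ via Corollary~\ref{cr:zygmund}, reduces the resulting dyadic double sums to single sums by Lemma~\ref{lm:jensen} when $\theta\le2$ and by a H\"older argument with an $\varepsilon$-shifted geometric weight when $\theta>2$, and finally passes from dyadic $n$ to general $m$ using the properties of~$\varphi$. Your plan instead uses Theorem~\ref{th:Np-w} together with the two-sided lacunary estimate $\wpar t\asymp\bigl(\sum_\mu a_\mu^2\min(1,2^\mu t)^{2k}\bigr)^{1/2}$ (which is correct, and your averaging argument for the lower bound is fine); the price is the extra ``head'' contribution $\sum_{\mu\le j}2^{2k(\mu-j)}a_\mu^2$, which the best-approximation route avoids, and which you correctly absorb using $k>r+\lambda$.

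The one genuine flaw is the tool you name for the ``$\ge$'' half of your key equivalence. The reverse inequalities of Lemmas~\ref{lm:lp-converse} and~\ref{lm:lp-asymp} (i.e.\ of \cite{p-berisha-b-k:mia-15}) require the sequence to be monotone decreasing and are stated for power weights $\mu^{\pm\alpha-1}$, $\nu^\lambda$; here the lacunary coefficients $a_\mu$ are only non-negative and your weights are geometric, so those lemmas do not apply as cited. Fortunately they are not needed: since $b_j^2\ge a_j^2$ (keep only the single term $\mu=j$), one has at once $\sum_{j>s}b_j^\theta2^{jr\theta}\ge\sum_{\mu>s}a_\mu^\theta2^{\mu r\theta}$ and $2^{-s\lambda\theta}\sum_{j\le s}b_j^\theta2^{j(r+\lambda)\theta}\ge2^{-s\lambda\theta}\sum_{\mu\le s}a_\mu^\theta2^{\mu(r+\lambda)\theta}$, so the lower bound is trivial. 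The real work sits entirely in the ``$\le$'' half, and there, because the weights are geometric rather than power-type, you should argue as the paper does for its $J_1$, $J_2$, $J_3$: Jensen (Lemma~\ref{lm:jensen}) for $\theta\le2$, and H\"older with a small $\varepsilon$-shift in the exponent for $\theta>2$, rather than quoting Lemma~\ref{lm:lp} verbatim; your exponent bookkeeping ($r>0$, $r+\lambda>0$, $k>r+\lambda$, and the cross-term inequality $(s-\mu)(r+\lambda-k)\le0$) is correct. With that substitution, and the routine dyadic-to-general-$m$ step you already indicate, the proposal goes through.
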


\begin{corollary}
	Putting $\varphi(\delta)=\delta^\alpha$, $0<\alpha<\lambda$, in the
	definition of the class $\Np$, we obtain~\cite{lakovic:mat-87}
	the Nikol'ski\u{\i} class~$H_p^{r+\alpha}$. Thus
	Theorem~\ref{th:Np-lacunary} gives the single coefficient condition
	\begin{displaymath}
		a_\mu\le C2^{-\mu(r+\alpha)}
	\end{displaymath}
	for $f\in H_p^{r+\alpha}$,
	%given in~\cite{berisha:serdica-88},
	%(see also~\cite{berisha-t-r:mat-84}),
	where the
	function~$f$ is given by~\eqref{eq:f-lacunary}.
\end{corollary}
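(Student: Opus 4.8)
The plan is to specialize the coefficient condition of Theorem~\ref{th:Np-lacunary} to $\varphi(\delta)=\delta^\alpha$ and to read off the single-coefficient bound from the resulting geometric series. First I would check that $\varphi(\delta)=\delta^\alpha$ with $0<\alpha<\lambda$ is admissible: it is continuous and positive on $(0,1)$, it is non-decreasing (so the monotonicity requirement holds with constant~$1$), and it satisfies $\varphi(2\delta)=2^\alpha\varphi(\delta)$, so conditions \ref{it:phi-continuous}--\ref{it:phi-2} all hold; moreover $k>r+\lambda>r+\alpha$, so the order of the modulus of smoothness is admissible. With this choice $\Np$ is the Nikol'ski\u{\i} class $H_p^{r+\alpha}$ (as in~\cite{lakovic:mat-87}), and the right-hand side of the condition in Theorem~\ref{th:Np-lacunary} becomes $Cm^{-\alpha}$.

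Second, I would use the lacunary structure of~\eqref{eq:f-lacunary} to pass to a dyadic condition. Since $\lambda_\nu=0$ unless $\nu=2^\mu$, putting $m=2^n$ turns the condition into
\[
	\biggl(
		\sum_{\mu>n}a_\mu^\theta 2^{\mu r\theta}
		+2^{-n\lambda\theta}\sum_{\mu\le n}a_\mu^\theta 2^{\mu(r+\lambda)\theta}
	\biggr)^{1/\theta}
	\le C\,2^{-n\alpha}.
\]
I would then argue that verifying this dyadic form for every~$n$ is equivalent to the original condition for every~$m$: for $2^n\le m<2^{n+1}$ the cut-offs $\nu>m$ and $\nu\le m$ single out exactly $\mu>n$ and $\mu\le n$, so both sums are constant on the interval, while the factor $m^{-\lambda\theta}$ and the right-hand side $m^{-\alpha}$ change only by bounded factors (here the doubling property of~$\varphi$ is used). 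Hence the two formulations agree up to absolute constants.

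Third, I would establish the equivalence of the dyadic condition with $a_\mu\le C2^{-\mu(r+\alpha)}$. For necessity, discarding all but the single term $\mu=n+1$ in the first sum gives $a_{n+1}^\theta 2^{(n+1)r\theta}\le C^\theta 2^{-n\alpha\theta}$, that is $a_{n+1}\le C'2^{-(n+1)(r+\alpha)}$, and letting~$n$ run over all non-negative integers yields the bound for every $\mu\ge1$ (the remaining case $\mu=0$ being the boundedness $a_0\le C$ read off the second sum at $n=0$). For sufficiency, I would substitute $a_\mu\le C2^{-\mu(r+\alpha)}$ into each sum. The first becomes $C^\theta\sum_{\mu>n}2^{-\mu\alpha\theta}$, a convergent geometric series with ratio $2^{-\alpha\theta}<1$ whose sum is at most a constant times $2^{-n\alpha\theta}$. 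The second becomes $C^\theta2^{-n\lambda\theta}\sum_{\mu\le n}2^{\mu(\lambda-\alpha)\theta}$; since $\lambda-\alpha>0$ the series is increasing and dominated by a constant times its last term $2^{n(\lambda-\alpha)\theta}$, so after multiplying by $2^{-n\lambda\theta}$ it is again at most a constant times $2^{-n\alpha\theta}$. Adding the two estimates returns the required bound $C\,2^{-n\alpha}$.

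The main obstacle I anticipate is not any single computation but making transparent why $0<\alpha<\lambda$ is the exact range in which the argument closes: positivity $\alpha>0$ is what makes the first (tail) series summable with the correct decay $2^{-n\alpha\theta}$, while the strict inequality $\alpha<\lambda$ is precisely what makes the second (partial-sum) series increasing, so that it is controlled by its final term and the weight $2^{-n\lambda\theta}$ is fully absorbed. At either endpoint one of the two series would fail to yield the clean $2^{-n\alpha\theta}$ bound, so the equivalence with the single-coefficient condition would not survive; keeping the constants from the two geometric sums separate until the very end is the cleanest way to make this mechanism visible.
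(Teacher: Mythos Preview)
Your argument is correct and is precisely the verification the paper leaves implicit: the corollary is stated without proof, as an immediate specialization of Theorem~\ref{th:Np-lacunary} (together with the identification of $\Np$ with $H_p^{r+\alpha}$ for $\varphi(\delta)=\delta^\alpha$ from~\cite{lakovic:mat-87}). Your reduction to the dyadic form, the single-term extraction for necessity, and the two geometric-series estimates for sufficiency are exactly the computations one is expected to supply, and your remark on why the range $0<\alpha<\lambda$ is sharp is a helpful addition.
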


\begin{corollary}
	If $\varphi(\delta)=C$, then we obtain~\cite{lakovic:mat-87}
	the Besov class~$B_p^{\theta r}$. Thus
	Theorem~\ref{th:Np-lacunary} gives the necessary
	and sufficient condition
	\begin{displaymath}
		\sum_{\mu=1}^\infty a_\mu^\theta 2^{\mu r\theta}<\infty
	\end{displaymath}
	for $f\in B_p^{\theta r}$, given in~\cite{potapov-b:publ-79}, where
	the function~$f$ is given by~\eqref{eq:f-lacunary}.
\end{corollary}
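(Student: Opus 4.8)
The plan is to obtain the statement directly from Theorem~\ref{th:Np-lacunary} by specializing $\varphi$ to a constant and then translating the resulting condition into the dyadic coefficients $a_\mu$. First I would verify that the constant function $\varphi(\delta)=C$ satisfies the hypotheses \ref{it:phi-continuous}--\ref{it:phi-2}: it is plainly non-negative, continuous and non-zero on $(0,1)$, and both the monotonicity-type bound $\varphi(\delta_1)\le\varphi(\delta_2)$ and the doubling bound $\varphi(2\delta)\le\varphi(\delta)$ hold trivially with constant~$1$. Thus the hypotheses of Theorem~\ref{th:Np-lacunary} are in force, and by~\cite{lakovic:mat-87} the class $\Np$ coincides in this case with the Besov class~$B_p^{\theta r}$.

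Applying Theorem~\ref{th:Np-lacunary} with $\varphi\equiv C$, membership $f\in B_p^{\theta r}$ becomes equivalent to the existence of a constant (again written~$C$) such that
\begin{displaymath}
	\sum_{\nu=m+1}^\infty \lambda_\nu^\theta \nu^{r\theta}
	+m^{-\lambda\theta}\sum_{\nu=1}^m \lambda_\nu^\theta \nu^{(r+\lambda)\theta}
	\le C
\end{displaymath}
for every positive integer~$m$. Since $\lambda_\nu$ vanishes unless $\nu=2^\mu$, where $\lambda_{2^\mu}=a_\mu$, both sums collapse to sums over~$\mu$: the first becomes $\sum_{2^\mu>m} a_\mu^\theta 2^{\mu r\theta}$ and the second becomes $m^{-\lambda\theta}\sum_{2^\mu\le m} a_\mu^\theta 2^{\mu(r+\lambda)\theta}$. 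It then remains to show that uniform boundedness in~$m$ of the sum of these two expressions is equivalent to the convergence of $\sum_{\mu=1}^\infty a_\mu^\theta 2^{\mu r\theta}$.

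For necessity I would take $m=1$: the first dyadic sum then runs over all $\mu\ge1$ and equals $\sum_{\mu=1}^\infty a_\mu^\theta 2^{\mu r\theta}$, which, being one of two non-negative summands bounded by~$C$, is therefore itself bounded by~$C$ and in particular finite. For the converse, since $f\in L_p$ its coefficient~$a_0$ is finite, so the stated convergence is equivalent to $S:=\sum_{\mu=0}^\infty a_\mu^\theta 2^{\mu r\theta}<\infty$; the first dyadic sum is a tail of~$S$ and hence is bounded by~$S$ for every~$m$. The only step needing a short computation is the second term, and this is where I would be most careful: on its range of summation one has $2^\mu\le m$, so $2^{\mu\lambda\theta}\le m^{\lambda\theta}$, whence
\begin{displaymath}
	m^{-\lambda\theta}\sum_{2^\mu\le m} a_\mu^\theta 2^{\mu(r+\lambda)\theta}
	=\sum_{2^\mu\le m} a_\mu^\theta 2^{\mu r\theta}
		\bigl(2^{\mu\lambda\theta}m^{-\lambda\theta}\bigr)
	\le\sum_{2^\mu\le m} a_\mu^\theta 2^{\mu r\theta}\le S.
\end{displaymath}
Both pieces are thus uniformly bounded by~$S$, yielding the required estimate with constant~$2S$. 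Apart from this elementary exponent comparison the argument presents no genuine obstacle, and the displayed necessary and sufficient condition is exactly the one established in~\cite{potapov-b:publ-79}.
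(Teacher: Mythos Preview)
Your argument is correct and is exactly the intended route: the paper gives no separate proof of this corollary, treating it as an immediate specialization of Theorem~\ref{th:Np-lacunary} with constant~$\varphi$, and your verification of conditions \ref{it:phi-continuous}--\ref{it:phi-2}, reduction to the dyadic sums via $\lambda_{2^\mu}=a_\mu$, and the elementary two-sided bound (taking $m=1$ for necessity, and using $2^{\mu\lambda\theta}\le m^{\lambda\theta}$ on the finite sum for sufficiency) fill in precisely the details the paper leaves implicit.
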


\begin{example}
	Let
	\begin{displaymath}
		f(x)\sim\sum_{\mu=0}^\infty a_\mu\cos2^\mu x,
	\end{displaymath}
	where are
	\begin{displaymath}
		a_\mu=2^{-\mu r}(\mu+1)^{-(\alpha+1/\theta)}, \quad \alpha>0.
	\end{displaymath}
	Then, we have
	\begin{displaymath}
		\Cn n^{-\alpha}
		\le\biggl(
				\sum_{\mu=n+1}^\infty a_\mu^\theta 2^{\mu r\theta}
			\biggr)^{1/\theta}
		\le\Cn n^{-\alpha}
	\end{displaymath}
	and
	\begin{displaymath}
		\Cn n^{-(\alpha+1/\theta)}
		\le\biggl(
				2^{-n\lambda\theta}
				  \sum_{\mu=0}^n a_\mu^\theta 2^{\mu(r+\lambda)\theta}
			\biggr)^{1/\theta}
		\le\Cn n^{-(\alpha+1/\theta)},
	\end{displaymath}
	thus implying \textup(see the proof of
	Theorem~\ref{th:Np-lacunary}\textup)
	$f\in\Np$ for
	$\varphi(\delta)=\left(\ln\frac1\delta\right)^{-\alpha}$.
	This means that classes~$N$ are classes of embedding between
	classes~$H$ and~$B$.
\end{example}

\section{Auxiliary statements}

In order to establish our results,
we use the following lemmas.

\begin{lemma}\label{lm:jensen}
	Let $0<\alpha<\beta<\infty$ and $a_\nu\ge0$. The following
	inequality holds true
	\begin{displaymath}
		\biggl(\sum_{\nu=1}^n a_\nu^\beta\biggr)^{1/\beta}
		\le\biggl(\sum_{\nu=1}^n a_\nu^\alpha\biggr)^{1/\alpha}.
	\end{displaymath}
\end{lemma}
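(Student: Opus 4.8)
The plan is to exploit the fact that both sides of the claimed inequality are positively homogeneous of degree one in the vector $(a_1,\dots,a_n)$, and thereby reduce to a normalized situation in which the estimate becomes elementary. First I would dispose of the trivial case: if $a_\nu=0$ for every $\nu\le n$, both sides vanish and there is nothing to prove. So I may assume $A:=\left(\sum_{\nu=1}^n a_\nu^\alpha\right)^{1/\alpha}>0$.

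Next I would replace each $a_\nu$ by $a_\nu/A$. By the homogeneity just noted this changes nothing, and it lets me assume $\sum_{\nu=1}^n a_\nu^\alpha=1$. In particular $0\le a_\nu\le1$ for each $\nu$, and since $\beta>\alpha$ the elementary monotonicity of the map $s\mapsto t^s$ (which is nonincreasing in $s$ for a fixed $t\in[0,1]$) yields $a_\nu^\beta\le a_\nu^\alpha$ for every $\nu$. Summing over $\nu=1,\dots,n$ gives $\sum_{\nu=1}^n a_\nu^\beta\le\sum_{\nu=1}^n a_\nu^\alpha=1$, and taking the $\beta$-th root produces $\left(\sum_{\nu=1}^n a_\nu^\beta\right)^{1/\beta}\le1=\left(\sum_{\nu=1}^n a_\nu^\alpha\right)^{1/\alpha}$, which is the assertion once the normalization is undone.

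Equivalently, one could phrase the argument as Jensen's inequality for the concave function $t\mapsto t^{\alpha/\beta}$ evaluated against the probability weights $a_\nu^\alpha\big/\sum_{\mu=1}^n a_\mu^\alpha$ — this is presumably why the lemma is labelled ``Jensen'' — but the direct normalization route above avoids invoking concavity at all. I do not expect a genuine obstacle here; the only points needing a moment's attention are the degenerate case $A=0$ and making the homogeneity reduction explicit.
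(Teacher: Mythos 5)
Your argument is correct and complete: the homogeneity reduction to $\sum_\nu a_\nu^\alpha=1$ followed by $a_\nu^\beta\le a_\nu^\alpha$ for $0\le a_\nu\le1$ is exactly the classical proof of this inequality. The paper itself gives no proof but only cites Jensen via Hardy--Littlewood--P\'olya, and your normalization argument is essentially the standard one found there, so there is nothing to add.
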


Proof of the lemma
is due to Jensen~\cite[p.~43]{hardy-l-p:inequalities}.

\begin{lemma}\label{lm:lp}
	Let $\seq a\nu$ be a sequence of non-negative numbers, $\alpha>0$,
	$\lambda$ a real number,
	$m$ and~$n$ positive integers such that $m<n$.
	Then
	\begin{enumerate}
	\item for $1\le p<\infty$ the following equalities hold
		\begin{displaymath}
			\sum_{\mu=m}^n \mu^{\alpha-1}
				\biggl(\sum_{\nu=\mu}^n a_\nu \nu^\lambda\biggr)^p
			\le\Cn\sum_{\mu=m}^n \mu^{\alpha-1}(a_\mu \mu^{\lambda+1})^p,
		\end{displaymath}
		\begin{displaymath}
			\sum_{\mu=m}^n \mu^{-\alpha-1}
				\biggl(\sum_{\nu=m}^\mu a_\nu \nu^\lambda\biggr)^p
			\le\Cn\sum_{\mu=m}^n \mu^{-\alpha-1}(a_\mu \mu^{\lambda+1})^p;
		\end{displaymath}
	\item for $0<p\le1$ the following equalities hold
		\begin{displaymath}
			\sum_{\mu=m}^n \mu^{\alpha-1}
				\biggl(\sum_{\nu=\mu}^n a_\nu \nu^\lambda\biggr)^p
			\ge\Cn\sum_{\mu=m}^n \mu^{\alpha-1}(a_\mu \mu^{\lambda+1})^p,
		\end{displaymath}
		\begin{displaymath}
			\sum_{\mu=m}^n \mu^{-\alpha-1}
				\biggl(\sum_{\nu=m}^\mu a_\nu \nu^\lambda\biggr)^p
			\ge\Cn\sum_{\mu=m}^n \mu^{-\alpha-1}(a_\mu \mu^{\lambda+1})^p,
		\end{displaymath}
	\end{enumerate}
	where constants~$\prevC[3]$, $\prevC[2]$, $\prevC$ and~$\lastC$
	depend only on numbers~$\alpha$, $\lambda$ and~$p$,
	and do not depend on~$m$, $n$
	as well as on the sequence $\seq a\nu$.
\end{lemma}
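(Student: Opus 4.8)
The plan is to reduce all four estimates to a single dyadic computation. Write $c_\nu=a_\nu\nu^\lambda\ge 0$, decompose the index range into the dyadic pieces $I_k=[2^k,2^{k+1})\cap[m,n]$, and set $S_k=\sum_{\nu\in I_k}c_\nu$. Since $\#I_k\asymp 2^k$, since $\mu^{\beta}\asymp 2^{k\beta}$ for $\mu\in I_k$ and any fixed real $\beta$, and since $(a_\mu\mu^{\lambda+1})^p=\mu^p c_\mu^p$, the right-hand side of each of the four inequalities is comparable (with constants depending only on $\alpha,\lambda,p$) to $\sum_k 2^{k(\pm\alpha+p-1)}\sum_{\nu\in I_k}c_\nu^p$, the sign of $\alpha$ being $+$ in the first inequality of each item and $-$ in the second. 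The proof then rests on three ingredients: comparing the Hardy/Copson-type sum on the left with an expression built from the $S_k$; summing a geometric progression in $k$, which is exactly where the hypothesis $\alpha>0$ enters (and is why the statement would fail for $\alpha\le 0$); and passing between $S_k^p$ and $\sum_{\nu\in I_k}c_\nu^p$ by Jensen's inequality, in the form $\bigl(\sum_{i=1}^N x_i\bigr)^p\le N^{p-1}\sum_{i=1}^N x_i^p$ when $p\ge 1$ and $\sum_{i=1}^N x_i^p\le N^{1-p}\bigl(\sum_{i=1}^N x_i\bigr)^p$ when $0<p\le 1$ (cf.\ Lemma~\ref{lm:jensen}).

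For the case $1\le p<\infty$ I would argue as follows. For $\mu\in I_k$ one has $\sum_{\nu=\mu}^n c_\nu\le\sum_{j\ge k}S_j$, and applying Hölder's inequality with a small auxiliary exponent $\varepsilon\in(0,\alpha/p)$, chosen so as to absorb the powers of $2^k$, yields $\bigl(\sum_{\nu=\mu}^n c_\nu\bigr)^p\le C\,2^{-\varepsilon p k}\sum_{j\ge k}2^{\varepsilon p j}S_j^p$ for every $\mu\in I_k$. Multiplying by $\mu^{\alpha-1}$ and summing over $\mu\in I_k$ gives $\le C\,2^{(\alpha-\varepsilon p)k}\sum_{j\ge k}2^{\varepsilon p j}S_j^p$; summing over $k$ and interchanging the order of summation, the left-hand side of the first inequality is at most $C\sum_j 2^{\varepsilon p j}S_j^p\bigl(\sum_{k\le j}2^{(\alpha-\varepsilon p)k}\bigr)\le C\sum_j 2^{\alpha j}S_j^p$, the inner geometric sum converging precisely because $\alpha-\varepsilon p>0$. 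The Jensen estimate $S_j^p\le C\,2^{(p-1)j}\sum_{\nu\in I_j}c_\nu^p$ then turns this into $C\sum_j 2^{(\alpha+p-1)j}\sum_{\nu\in I_j}c_\nu^p$, which is the right-hand side. The second inequality of this item is the mirror statement, obtained from $\sum_{\nu=m}^\mu c_\nu\le\sum_{j\le k}S_j$ in the same way, with $-\alpha$ in place of $+\alpha$ in the exponents; once more $\alpha>0$ is what makes the geometric summation converge. Alternatively, this item is just the finite form of the classical discrete Hardy and Copson inequalities and can be quoted from \cite{hardy-l-p:inequalities}.

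For the case $0<p\le1$ the inequalities point the other way, and the block sums are used diagonally rather than through Hölder. For the first inequality, if $\mu\in I_{k-1}$ then $\sum_{\nu=\mu}^n c_\nu\ge S_k$, so, using $\alpha>0$ in the form $\sum_{\mu\in I_{k-1}}\mu^{\alpha-1}\asymp 2^{\alpha k}$, I obtain $\sum_{\mu\in I_{k-1}}\mu^{\alpha-1}\bigl(\sum_{\nu=\mu}^n c_\nu\bigr)^p\ge S_k^p\sum_{\mu\in I_{k-1}}\mu^{\alpha-1}\ge C\,2^{\alpha k}S_k^p$; summing over $k$, the left-hand side is at least $C\sum_k 2^{\alpha k}S_k^p$. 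On the other side, the Jensen estimate $\sum_{\nu\in I_k}c_\nu^p\le C\,2^{(1-p)k}S_k^p$ shows that the right-hand side $C\sum_k 2^{(\alpha+p-1)k}\sum_{\nu\in I_k}c_\nu^p$ is at most $C\sum_k 2^{\alpha k}S_k^p$, and the two bounds combine to give the claim. The second inequality is again the mirror image, using $\mu\in I_{k+1}\Rightarrow\sum_{\nu=m}^\mu c_\nu\ge S_k$ together with $\sum_{\mu\in I_{k+1}}\mu^{-\alpha-1}\asymp 2^{-\alpha k}$. These reverse estimates are precisely the discrete reverse Copson- and Leindler-type inequalities of \cite{p-berisha-b-k:mia-15}, so this item too may be quoted directly.

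I expect the main obstacle to be purely organizational: keeping the geometric summations straight after the interchanges of order of summation, since it is their convergence that dictates the sign relations among the exponents and makes the strict positivity of $\alpha$ indispensable. The one point that needs a separate, entirely routine discussion is the treatment of the at most two incomplete dyadic blocks at the ends of $[m,n]$, where $\#I_k$ need not be $\asymp 2^k$; there one simply keeps the partial block sums intact, which affects only the constants. Conceptually, the genuinely new part --- as against the classical Hardy--Copson theory --- is the reverse range $0<p\le1$, where the diagonal use of the block sums must be paired with the reverse direction of Jensen's inequality; this is exactly the role played by the reverse $l_p$-type inequalities of \cite{p-berisha-b-k:mia-15}.
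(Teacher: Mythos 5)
The paper offers no proof of this lemma at all---it is quoted from \cite[p.~308]{hardy-l-p:inequalities}---so your dyadic argument is by necessity a different route, and for part (1), $1\le p<\infty$, it is a correct one: the block decomposition, the H\"older step with an auxiliary exponent $\varepsilon<\alpha/p$, the geometric summation (which indeed is where $\alpha>0$ enters), and Jensen's inequality all go through, and there the incomplete end blocks really are harmless, since only the one-sided estimates $\sum_{\mu\in I_k}\mu^{\alpha-1}\le C2^{k\alpha}$ and $\#I_k\le 2^k$ are used. (Alternatively, extending $a_\nu$ by zero outside $[m,n]$ reduces part (1) to the classical infinite-range Hardy--Copson inequalities.)

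Part (2) is where you have a genuine gap, and it sits exactly at the point you dismiss as purely organizational. Your diagonal shift ``$\mu\in I_{k-1}\Rightarrow\sum_{\nu=\mu}^n c_\nu\ge S_k$'' never produces the block containing $m$ (nor, in the mirror inequality, the block containing $n$), and no routine treatment of those end blocks can repair this, because the inequality as literally stated---both sides over the same range $[m,n]$, arbitrary non-negative $a_\nu$, constant independent of $m$ and $n$---fails precisely there: take $p=1$, $\lambda=0$ and $a_\nu=0$ except $a_m=1$; then the left-hand side of the first display of part (2) equals $m^{\alpha-1}$ while the right-hand side is $C\,m^{\alpha}$, forcing $C\le 1/m$. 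Similarly $a_\nu=0$ except $a_n=1$ gives $n^{-\alpha-1}$ against $C\,n^{-\alpha}$ in the second display. The classical reversed inequalities for $0<p\le1$ are true when the outer summation covers a full range (starting at $1$ for the first display, extending to $\infty$ for the second), which is what makes the extreme block carry enough weight; once one insists on genuinely finite ranges one must pay with extra hypotheses, namely monotonicity of the sequence together with index shifts, and that is exactly the content of Lemmas~\ref{lm:lp-converse} and~\ref{lm:lp-asymp} from \cite{p-berisha-b-k:mia-15}. For the same reason your closing remark is a misattribution: the results of \cite{p-berisha-b-k:mia-15} are reverses of part (1) for monotone sequences, not part (2) of the present lemma, so part (2) cannot simply be quoted from there. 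To make your write-up sound you should either prove part (2) in the full-range form just described (which is how it is effectively used), or add explicitly the hypotheses under which your block argument closes at the endpoints.
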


Proof of the lemma is given in~\cite[p.~308]{hardy-l-p:inequalities}.

Lemmas~\ref{lm:lp-converse} and~\ref{lm:lp-asymp} that follow
state certain $l_p$-type inequalities
which are reversed to the ones given in Lemma~\ref{lm:lp}
and closely related to Copson's	and Leindler's inequalities
(see, e.g.,
\cite{%
	copson:london-28,leindler:acta-70,leindler:jipam-00,
	tikhonov-z:springer-14%
}%
).

We write $a_\nu\downarrow$
if $\seq a\nu$ is a monotone-decreasing sequence of non-negative numbers,
i.e.\ if $a_\nu\ge a_{\nu+1}\ge0$ $(\nu=1,2,\dotsc)$.

\begin{lemma}\label{lm:lp-converse}
	Let $a_\nu\downarrow$, $\alpha>0$, $\lambda$ a real number,
	$m$ and~$n$ positive integers.
	Then
	\begin{enumerate}
	\item for $1\le p<\infty$, $n\ge16m$ the following equalities hold
		\begin{displaymath}
			\sum_{\mu=m}^n \mu^{\alpha-1}
				\biggl(\sum_{\nu=\mu}^n a_\nu \nu^\lambda\biggr)^p
			\ge\Cn\sum_{\mu=8m}^n \mu^{\alpha-1}(a_\mu \mu^{\lambda+1})^p,
		\end{displaymath}
		\begin{displaymath}
			\sum_{\mu=m}^n \mu^{-\alpha-1}
				\biggl(\sum_{\nu=m}^\mu a_\nu \nu^\lambda\biggr)^p
			\ge\Cn\sum_{\mu=4m}^n \mu^{-\alpha-1}(a_\mu \mu^{\lambda+1})^p;
		\end{displaymath}
	\item for $0<p\le1$, $n\ge4m$ the following equalities hold
		\begin{displaymath}
			\sum_{\mu=4m}^n \mu^{\alpha-1}
				\biggl(\sum_{\nu=\mu}^n a_\nu \nu^\lambda\biggr)^p
			\le\Cn\sum_{\mu=m}^n \mu^{\alpha-1}(a_\mu \mu^{\lambda+1})^p,
		\end{displaymath}
		\begin{displaymath}
			\sum_{\mu=4m}^n \mu^{-\alpha-1}
				\biggl(\sum_{\nu=4m}^\mu a_\nu \nu^\lambda\biggr)^p
			\le\Cn\sum_{\mu=m}^n \mu^{-\alpha-1}(a_\mu \mu^{\lambda+1})^p,
		\end{displaymath}
	\end{enumerate}
	where constants~$\prevC[3]$, $\prevC[2]$, $\prevC$ and~$\lastC$
	depend only on numbers~$\alpha$, $\lambda$ and~$p$,
	and do not depend on~$m$, $n$
	as well as on the sequence $\seq a\nu$.
\end{lemma}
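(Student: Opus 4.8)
The plan is to reduce all four inequalities to the elementary two-sided estimate
\begin{displaymath}
	c_\lambda\mu^{\lambda+1}\le\sum_{\nu=\lceil\mu/2\rceil}^{\mu}\nu^\lambda\le C_\lambda\mu^{\lambda+1},
\end{displaymath}
whose constants depend only on~$\lambda$, combined with the monotonicity $a_\nu\ge a_{\nu+1}\ge0$. The two ranges $p\ge1$ and $0<p\le1$ exploit the monotonicity in opposite directions, and the second range also uses the elementary inequality $\bigl(\sum_i x_i\bigr)^p\le\sum_i x_i^p$, a special case of Lemma~\ref{lm:jensen}.

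For part~(1) the idea is that monotonicity lets one bound a single ``diagonal'' term of the right-hand side by an entire block of the inner sum on the left: since $a_\mu\le a_\nu$ for $\nu\le\mu$,
\begin{displaymath}
	a_\mu\mu^{\lambda+1}\le C_\lambda a_\mu\sum_{\nu=\lceil\mu/2\rceil}^{\mu}\nu^\lambda\le C_\lambda\sum_{\nu=\lceil\mu/2\rceil}^{\mu}a_\nu\nu^\lambda .
\end{displaymath}
In the first inequality this block lies inside $\sum_{\nu=\lceil\mu/2\rceil}^{n}a_\nu\nu^\lambda$ (using $\mu\le n$); raising to the power~$p$, multiplying by $\mu^{\alpha-1}$, summing over $\mu\ge8m$ and reindexing by $j=\lceil\mu/2\rceil$ — each $j\ge4m$ arising from at most two values of~$\mu$, with $\mu^{\alpha-1}$ comparable to $j^{\alpha-1}$ — turns the sum into a constant multiple of $\sum_{j=m}^{n}j^{\alpha-1}\bigl(\sum_{\nu=j}^{n}a_\nu\nu^\lambda\bigr)^p$, which is the left-hand side. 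In the second inequality the same block lies inside $\sum_{\nu=m}^{\mu}a_\nu\nu^\lambda$ once $\mu\ge2m$, and no reindexing is needed: termwise $\mu^{-\alpha-1}\bigl(\sum_{\nu=m}^{\mu}a_\nu\nu^\lambda\bigr)^p\ge c_\lambda^p\mu^{-\alpha-1}(a_\mu\mu^{\lambda+1})^p$, and summing over $\mu\ge4m$ finishes it. The shifts $m\mapsto8m$, $m\mapsto4m$ and the hypothesis $n\ge16m$ only serve to keep these index sets non-degenerate.

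For part~(2) the mechanism is reversed: one estimates the inner sum from above and then uses subadditivity of $t\mapsto t^p$. Split $[\mu,n]$ (respectively $[4m,\mu]$) into dyadic blocks $[2^j,2^{j+1})$; on the $j$-th block $a_\nu\le a_{2^j}$, so by the elementary estimate $\sum_{\nu\in[2^j,2^{j+1})}a_\nu\nu^\lambda\le C_\lambda a_{2^j}2^{j(\lambda+1)}$. Applying subadditivity and then summing $\mu$ over each dyadic block, the left-hand side gets dominated by a double sum of the shape $\sum_i 2^{\pm i\alpha}\sum_{j\ge i}a_{2^j}^p2^{j(\lambda+1)p}$, the sign depending on which of the two inequalities is being proved. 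Interchanging the order of summation, the inner geometric series in~$i$ is — because $\alpha>0$ — comparable to its extreme term $2^{\pm j\alpha}$, which leaves $\sum_j a_{2^j}^p2^{j(\alpha\pm(\lambda+1)p)}$; and a matching dyadic lower bound for the right-hand side, obtained this time from $a_\mu\ge a_{2^{j+1}}$ on the $j$-th block, shows that the right-hand side controls the same quantity over essentially the same range of~$j$. Comparing the two closes the argument.

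The step I expect to be the main obstacle is the bookkeeping in part~(2): one must check that the ranges of~$j$ produced on the two sides agree up to finitely many boundary blocks, that the partial block at~$n$ (which need not be a power of~$2$) does no harm — it only shrinks a sum where an upper bound is wanted — and, crucially, that the geometric summation over~$i$ really is dominated by its extreme term, which is precisely the place where the hypothesis $\alpha>0$ is used. The index shifts from~$m$ to~$4m$ (and~$8m$) together with the size conditions $n\ge4m$, $n\ge16m$ supply exactly the slack needed to absorb those boundary blocks.
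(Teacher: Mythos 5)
Your argument is correct, but note that the paper itself contains no proof of Lemma~\ref{lm:lp-converse}: it is quoted from~\cite{p-berisha-b-k:mia-15}, so there is no internal argument to compare against, and what you supply is a self-contained elementary proof. It holds up. In part~(1) the chain $a_\mu\mu^{\lambda+1}\le C_\lambda a_\mu\sum_{\nu=\lceil\mu/2\rceil}^{\mu}\nu^\lambda\le C_\lambda\sum_{\nu=\lceil\mu/2\rceil}^{n}a_\nu\nu^\lambda$ is valid for every real~$\lambda$ (on $[\mu/2,\mu]$ one has $\nu^\lambda\asymp\mu^\lambda$ with constants depending only on~$\lambda$), and the reindexing $j=\lceil\mu/2\rceil$, with at most two preimages per~$j$ and $\mu^{\alpha-1}\asymp j^{\alpha-1}$, places $j$ in $[4m,\lceil n/2\rceil]\subset[m,n]$, as needed; in fact this half of your argument uses neither $p\ge1$ nor $n\ge16m$, so you prove slightly more than stated, which is harmless. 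In part~(2) the dyadic decomposition with subadditivity of $t\mapsto t^p$, the geometric summation in~$i$ (the one place where $\alpha>0$ is genuinely needed), and the matching block lower bound for the right-hand side over $[2^{j-1},2^j)\subset[m,n]$ — which is precisely what the shift from~$m$ to~$4m$ guarantees, since the relevant $j$ satisfy $2^{j-1}>m$ and $2^j\le n$ — fit together correctly. One presentational slip: for the second inequality of part~(2) the inner sum after interchanging the order runs over $j\le i$ (dyadic blocks of $[4m,\mu]$), not $j\ge i$; your ``extreme term'' remark still applies because $\sum_{i\ge j}2^{-i\alpha}\asymp2^{-j\alpha}$, but the generic double-sum shape you display should reflect both orientations. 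Whether your route coincides with that of~\cite{p-berisha-b-k:mia-15} cannot be judged from the present paper; as a proof of the stated lemma it is complete.
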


Proof of the lemma is given in~\cite{p-berisha-b-k:mia-15}.

\begin{lemma}\label{lm:lp-asymp}
	Let $a_\nu\downarrow$, $\alpha>0$, $\lambda$ a real number,
	$m$ and~$n$ positive integers.
	For $0<p<\infty$ the following inequalities hold
	\begin{displaymath}
		\Cn\sum_{\mu=1}^n \mu^{\alpha-1}(a_\mu \mu^{\lambda+1})^p
		\le\sum_{\mu=1}^n \mu^{\alpha-1}
			\biggl(\sum_{\nu=\mu}^n a_\nu \nu^\lambda\biggr)^p
		\le\Cn\sum_{\mu=1}^n \mu^{\alpha-1}(a_\mu \mu^{\lambda+1})^p,
	\end{displaymath}
	\begin{displaymath}
		\Cn\sum_{\mu=1}^n \mu^{-\alpha-1}(a_\mu \mu^{\lambda+1})^p
		\le\sum_{\mu=1}^n \mu^{-\alpha-1}
			\biggl(\sum_{\nu=1}^\mu a_\nu \nu^\lambda\biggr)^p
		\le\Cn\sum_{\mu=1}^n \mu^{-\alpha-1}(a_\mu \mu^{\lambda+1})^p,
	\end{displaymath}
	where constants~$\prevC[3]$, $\prevC[2]$, $\prevC$ and~$\lastC$
	depend only on numbers~$\alpha$, $\lambda$ and~$p$,
	and do not depend on~$m$, $n$
	as well as on the sequence $\seq a\nu$.
\end{lemma}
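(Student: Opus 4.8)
The plan is to treat the ranges $1\le p<\infty$ and $0<p\le1$ separately, and in each range to obtain one half of each two‑sided estimate directly from Lemma~\ref{lm:lp} (taken with $m=1$) and the other half from the reverse inequalities of Lemma~\ref{lm:lp-converse} (also with $m=1$), after which only finitely many boundary terms remain to be patched by elementary means. In detail: for $1\le p<\infty$ the upper bounds in both displays are precisely Lemma~\ref{lm:lp}(1) with $m=1$, and for $0<p\le1$ the lower bounds in both displays are precisely Lemma~\ref{lm:lp}(2) with $m=1$; these four inequalities need nothing further (the degenerate case $n=1$, where each side reduces to $a_1^p$, being trivial throughout).

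For the lower bounds in the case $1\le p<\infty$ I would invoke Lemma~\ref{lm:lp-converse}(1) with $m=1$. For the first display and $n\ge16$ this gives $\sum_{\mu=1}^n\mu^{\alpha-1}\bigl(\sum_{\nu=\mu}^n a_\nu\nu^\lambda\bigr)^p\ge C\sum_{\mu=8}^n\mu^{\alpha-1}(a_\mu\mu^{\lambda+1})^p$, so it remains only to absorb the head $\sum_{\mu=1}^7\mu^{\alpha-1}(a_\mu\mu^{\lambda+1})^p$. Since $a_\mu\le a_1$, this head is at most $a_1^p\sum_{\mu=1}^7\mu^{\alpha-1+p(\lambda+1)}=C(\alpha,\lambda,p)\,a_1^p$, while $a_1^p$ does not exceed the $\mu=1$ summand $\bigl(\sum_{\nu=1}^n a_\nu\nu^\lambda\bigr)^p$ of the left‑hand side; adding the two pieces yields the desired bound. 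The second display is handled identically, the shift $8$ being replaced by the shift $4$ furnished by Lemma~\ref{lm:lp-converse}(1). For $n$ below the threshold $16$ all sums are finite and the estimate is immediate: the right‑hand side is again at most $C(\alpha,\lambda,p)\,a_1^p$, which is dominated by the $\mu=1$ term of the left‑hand side.

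For the upper bounds in the case $0<p\le1$ the same scheme works with Lemma~\ref{lm:lp-converse}(2) (valid for $n\ge4$, $m=1$), except that there the outer — and, in the second display, also the inner — summation starts at $4$ rather than at $1$, which calls for slightly more care. To patch the finitely many extra terms one uses the subadditivity $\bigl(\sum x_\nu\bigr)^p\le\sum x_\nu^p$ for $0<p<1$ (Lemma~\ref{lm:jensen} with exponents $p$ and $1$): one peels off the first few summands of the inner sum, controls the remaining part either by Lemma~\ref{lm:lp-converse}(2) itself or — in the second display — by the convergence of $\sum_\mu\mu^{-\alpha-1}$ (which holds because $\alpha>0$), and bounds each peeled term $(a_j\,j^\lambda)^p$ with $j\le3$ against a single summand of the right‑hand side; in the first display the analogous leftover $\bigl(\sum_{\nu=4}^n a_\nu\nu^\lambda\bigr)^p$ is recognized, up to the constant $4^{\,1-\alpha}$, as the $\mu=4$ term already estimated by Lemma~\ref{lm:lp-converse}(2). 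The case $n<4$ is trivial.

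I expect the statement to carry no analytic content beyond Lemmas~\ref{lm:lp} and~\ref{lm:lp-converse} (the monotonicity $a_\nu\downarrow$ entering only through the latter, i.e.\ exactly in the lower bounds for $1\le p<\infty$ and the upper bounds for $0<p\le1$). The only genuinely delicate point is the shifted inner summation in Lemma~\ref{lm:lp-converse}(2): it forces the splitting‑and‑subadditivity argument above, and one must keep track of which leftover terms get absorbed by convergence of the weight $\sum_\mu\mu^{-\alpha-1}$ and which by an already‑estimated boundary term. Everything else is a routine matching of summation ranges.
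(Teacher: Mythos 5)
Your argument is correct, and there is in fact nothing in the paper to compare it with: Lemma~\ref{lm:lp-asymp} is not proved there but quoted from~\cite{p-berisha-b-k:mia-15}, just like Lemmas~\ref{lm:lp} and~\ref{lm:lp-converse}. What you supply is a derivation of the two-sided asymptotic estimates as a purely formal consequence of those two quoted lemmas: the halves valid for arbitrary non-negative sequences (upper bounds for $1\le p<\infty$, lower bounds for $0<p\le1$) are Lemma~\ref{lm:lp} with $m=1$, and the halves that genuinely use $a_\nu\downarrow$ follow from Lemma~\ref{lm:lp-converse} with $m=1$ after absorbing finitely many boundary terms, and your absorptions do check out. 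For $1\le p<\infty$ the head $\sum_{\mu\le 7}\mu^{\alpha-1}(a_\mu\mu^{\lambda+1})^p\le C(\alpha,\lambda,p)\,a_1^p$ is dominated by the $\mu=1$ summand of the middle expression because $\sum_{\nu=1}^n a_\nu\nu^\lambda\ge a_1$ (same for $\mu\le3$ in the second display, and for all $n$ below the threshold $16$). For $0<p\le1$ the subadditivity $(x+y)^p\le x^p+y^p$ peels off $\sum_{\nu\le3}(a_\nu\nu^\lambda)^p$, each term of which is comparable, with constants depending only on $\alpha,\lambda,p$, to the corresponding single summand of the right-hand side since $1\le\nu\le3$; the remaining block is controlled by Lemma~\ref{lm:lp-converse}(2) itself in the first display (its $\mu=4$ term majorizes the leftover $(\sum_{\nu=4}^n a_\nu\nu^\lambda)^p$ up to $4^{1-\alpha}$) and by Lemma~\ref{lm:lp-converse}(2) together with $\sum_\mu\mu^{-\alpha-1}<\infty$ (here $\alpha>0$ is used) in the second, while $n<4$ reduces to finitely many terms. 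So your route buys a proof that stays entirely inside the paper's stated toolkit and makes explicit that Lemma~\ref{lm:lp-asymp} contains no analytic content beyond the Hardy--Littlewood--P\'olya inequalities of Lemma~\ref{lm:lp} and the reverse Copson/Leindler inequalities of Lemma~\ref{lm:lp-converse}, whereas the paper simply imports it as an external result.
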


The lemma is also proved in~\cite{p-berisha-b-k:mia-15}.

\begin{lemma}\label{lm:anu-w}
	Let $f\in\Lp$ for a fixed~$p$ from the interval $1<p<\infty$
	and let
	\begin{displaymath}
		f(x)\sim\sum_{\nu=1}^\infty a_\nu\cos\nu x, \quad a_\nu\downarrow0.
	\end{displaymath}
	The following inequalities hold
	\begin{multline*}
		\Cn\frac1{n^k}
					\biggl(\sum_{\nu=1}^n a_\nu^p \nu^{(k+1)p-2}\biggr)^{1/p}
				+\biggl(\sum_{\nu=n+1}^\infty a_\nu^p \nu^{p-2}\biggr)^{1/p}
			\le\wpar{\frac1n}\\
		\le\Cn\frac1{n^k}
					\biggl(\sum_{\nu=1}^n a_\nu^p \nu^{(k+1)p-2}\biggr)^{1/p}
				+\biggl(\sum_{\nu=n+1}^\infty a_\nu^p \nu^{p-2}\biggr)^{1/p},
	\end{multline*}
	where constants~$\prevC$ and~$\lastC$
	do not depend on~$n$ and~$f$.
\end{lemma}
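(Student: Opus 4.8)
The plan is to reduce $\wpar{1/n}$ to the $L_p$-norm of an auxiliary function with \emph{monotone} Fourier coefficients, for which the classical Hardy--Littlewood theorem gives a two-sided bound in terms of a Fourier-coefficient sum, and then to bring that sum to the form in the statement by means of Lemmas~\ref{lm:lp}--\ref{lm:lp-asymp}.

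For the reduction I would start from the identity
$\Delta_h^k\cos\nu x=\bigl(2\sin\tfrac{\nu h}{2}\bigr)^k\cos\bigl(\nu x+\tfrac{k\nu h}{2}+\tfrac{k\pi}{2}\bigr)$,
which exhibits $\Delta_h^k f$ as a trigonometric series whose $\nu$-th amplitude is $a_\nu\bigl|2\sin\tfrac{\nu h}{2}\bigr|^k$; this amplitude is comparable to $|h|^k a_\nu\nu^k$ for $\nu\lesssim 1/|h|$ and is $\le 2^k a_\nu$, with an oscillating factor, for $\nu\gtrsim 1/|h|$. Splitting the series at $\nu\sim 1/|h|$ and using the boundedness of the partial-sum projections on $\Lp$ (the M.~Riesz theorem---here the hypothesis $\allp$ is essential) to control the two frequency bands separately, one is led to compare $\wpar{1/n}$ with the norm of a model series
\[
  \frac1{n^k}\sum_{\nu\le n}\widetilde a_\nu\cos\nu x+\sum_{\nu>n}a_\nu\cos\nu x ,
\]
where $\{\widetilde a_\nu\}_{\nu\le n}$ is a suitable \emph{monotone} modification of $a_\nu\nu^k$ (for instance its decreasing maximal function $\widetilde a_\nu=\max_{\nu\le\mu\le n}a_\mu\mu^k$, chosen precisely so that the rescaled coefficient sequence $\bigl(\tfrac1{n^k}\widetilde a_1,\dots,\tfrac1{n^k}\widetilde a_n,a_{n+1},a_{n+2},\dots\bigr)$ is decreasing to $0$); the slowly varying factors $\bigl(2\sin\tfrac{\nu h}{2}\bigr)^k/(\nu h)^k$ and the phases $e^{ik\nu h/2}$ on the low band are removed by a Marcinkiewicz-type multiplier estimate, the supremum over $|h|\le 1/n$ is attained at $|h|\asymp 1/n$ because the bounds obtained are non-decreasing in $|h|$ (a consequence of $a_\nu\downarrow$), and a matching lower bound is produced by a single admissible value $h\asymp 1/n$.

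Applying the Hardy--Littlewood theorem to the model series, its norm is comparable to $\bigl(n^{-kp}\sum_{\nu\le n}\widetilde a_\nu^{\,p}\nu^{p-2}+\sum_{\nu>n}a_\nu^p\nu^{p-2}\bigr)^{1/p}$; since
\[
  \sum_{\nu\le n}\widetilde a_\nu^{\,p}\,\nu^{p-2}\asymp\sum_{\nu\le n}(a_\nu\nu^k)^p\nu^{p-2}=\sum_{\nu\le n}a_\nu^p\nu^{(k+1)p-2}
\]
---where ``$\ge$'' is immediate from $\widetilde a_\nu\ge a_\nu\nu^k$ and ``$\le$'' follows from $a_\nu\downarrow$ together with the reverse Hardy-type inequality of Lemma~\ref{lm:lp-converse}, comparing the maximal modification with $\{a_\nu\nu^k\}$ on the intervals cut out by the records of $a_\nu\nu^k$ (here Lemmas~\ref{lm:lp} and~\ref{lm:lp-asymp} are also used)---one arrives at
\[
  \wpar{\frac1n}\asymp\frac1{n^k}\Bigl(\sum_{\nu=1}^n a_\nu^p\nu^{(k+1)p-2}\Bigr)^{1/p}+\Bigl(\sum_{\nu=n+1}^\infty a_\nu^p\nu^{p-2}\Bigr)^{1/p},
\]
which is the asserted estimate (the exact position of the summation boundary is immaterial, any bounded shift being absorbed into the first term).

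The hard part is the first reduction step, i.e.\ identifying $\wpar{1/n}$ up to constants with the norm of a monotone-coefficient model series. Everything delicate is concentrated there: handling the two frequency bands simultaneously, which is what forces $\allp$; justifying the replacement of the non-monotone amplitudes $a_\nu\nu^k$ by a monotone majorant at no cost in the $L_p$-norm; controlling the oscillatory factor $\bigl|2\sin\tfrac{\nu h}{2}\bigr|^k$ on the high band from above and, for the lower estimate, from below on a block of frequencies of positive density; and verifying the monotonicity in $|h|$ that localizes the supremum to $|h|=1/n$. Once this reduction is in place, the remaining ingredients---the Hardy--Littlewood theorem and the Hardy-type inequalities of Lemmas~\ref{lm:lp}--\ref{lm:lp-asymp}---enter only in routine bookkeeping.
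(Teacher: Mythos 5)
The paper gives no proof of this lemma at all --- it is quoted from \cite{potapov-b:publ-79} --- so the comparison is with the standard argument there, which obtains the two-sided bound through best approximations and partial sums (Jackson's inequality, boundedness of the partial-sum operator for $1<p<\infty$, a Stechkin--Nikol'ski\u{\i}/Bernstein type estimate for $n^{-k}\|S_nf^{(k)}\|_p$) combined with the Hardy--Littlewood theorem, rather than through a direct multiplier analysis of $\Delta_h^k$. Your outline has a reasonable shape, but the two steps you yourself call ``the hard part'' are asserted rather than proved, and one of them fails as stated. First, replacing the low-band amplitudes $a_\nu\bigl|2\sin\frac{\nu h}{2}\bigr|^k\asymp|h|^k a_\nu\nu^k$ by the monotone majorant $\tilde a_\nu=\max_{\nu\le\mu\le n}a_\mu\mu^k$ ``at no cost in the $L_p$-norm'' is exactly the nontrivial content of the lemma: for $p\ne2$, coefficientwise domination by a monotone sequence controls the $L_p$-norm in neither direction, so what you are invoking is a two-sided Hardy--Littlewood type estimate for the non-monotone sequence $a_\nu\nu^k$, which has to be proved (e.g.\ by Abel summation and boundedness of partial sums, or for quasi-monotone coefficients as in the cited literature), not assumed. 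Likewise the claim $\sum_{\nu\le n}\tilde a_\nu^{\,p}\nu^{p-2}\le C\sum_{\nu\le n}a_\nu^p\nu^{(k+1)p-2}$ needs a genuine argument (it is true, but via a bounded-overlap analysis of the record blocks of $a_\nu\nu^k$); Lemma~\ref{lm:lp-converse} compares sums of one sequence over shifted index ranges and does not by itself compare a running maximum with the underlying sequence, so this is not ``routine bookkeeping''.

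Second, the lower bound cannot be ``produced by a single admissible value $h\asymp1/n$'' in the way you describe. The factor $\bigl|2\sin\frac{\nu h}{2}\bigr|^k$ vanishes whenever $\nu h$ is near an integer multiple of $2\pi$, so one fixed $h$ annihilates whole bands of high frequencies; inside a dyadic block $[2^j,2^{j+1})$ with $2^j>n$ the set where the factor is bounded below is a union of about $2^jh$ separated intervals, and extracting it requires a projection whose $L_p$-norm is not uniformly bounded (or an averaging over $h$, which you do not perform), so the full tail term $\bigl(\sum_{\nu>n}a_\nu^p\nu^{p-2}\bigr)^{1/p}$ is not recovered this way. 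The standard repair --- and the route of the cited proof --- is to get the tail term from Jackson's inequality $E_n(f)_p\le C\,\omega_k\bigl(f,\frac1n\bigr)_p$ together with $\|f-S_nf\|_p\le C E_n(f)_p$ and the Hardy--Littlewood theorem applied to the genuinely monotone tail coefficients, and to get the low-frequency term from $n^{-k}\|S_nf^{(k)}\|_p\le C\,\omega_k\bigl(f,\frac1n\bigr)_p$. With these substitutions your upper-bound analysis (Riesz splitting, a Marcinkiewicz multiplier for the smooth factor, Hardy--Littlewood, and Lemmas~\ref{lm:lp}--\ref{lm:lp-asymp}) can be completed; as written, though, the proposal is an outline whose decisive steps are missing.
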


The lemma is proved in~\cite{potapov-b:publ-79}.

\begin{lemma}\label{lm:Np-E}
	A function~$f$ belongs to the class $\Np$ if and only if
	\begin{displaymath}
		\biggl(
			\sum_{\mu=n+1}^\infty 2^{\mu r\theta}\En[2^\mu]^\theta
			+2^{-n\lambda\theta}
				\sum_{\mu=0}^n 2^{\mu(r+\lambda)\theta}\En[2^\mu]^\theta
		\biggr)^{1/\theta}
		\le C\varphi\left(\frac 1{2^n}\right),
	\end{displaymath}
	where constant~$C$ does not depend on~$n$.
\end{lemma}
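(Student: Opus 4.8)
The plan is to reduce the integral membership condition for $\Np$ to an equivalent \emph{dyadic} condition on the moduli of smoothness, and then to exchange moduli for best approximations using the classical direct (Jackson) and inverse (Stechkin) theorems of approximation theory; throughout, $C$ denotes a positive constant depending only on the fixed parameters $p,\theta,r,\lambda,k$, possibly different at each occurrence. \emph{Step 1 (dyadic reduction).} Since $t\mapsto\w$ is non-decreasing, on each interval $[2^{-(\mu+1)},2^{-\mu}]$ one has $\wpar{2^{-(\mu+1)}}\le\w\le\wpar{2^{-\mu}}$, while $\int_{2^{-(\mu+1)}}^{2^{-\mu}}t^{-r\theta-1}\,dt\asymp 2^{\mu r\theta}$ and $\int_{2^{-(\mu+1)}}^{2^{-\mu}}t^{-(r+\lambda)\theta-1}\,dt\asymp 2^{\mu(r+\lambda)\theta}$. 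Summing over the dyadic intervals --- and observing that, after the factor $2^{-n\lambda\theta}$, the $\mu=n$ summand of the second sum coincides with the $\mu=n$ summand of the first --- one sees that for $\delta=2^{-n}$ the left-hand side of condition~(3) in the definition of $\Np$ is comparable to
\[
	S_n:=\biggl(\sum_{\mu=n+1}^\infty 2^{\mu r\theta}\wpar{2^{-\mu}}^\theta+2^{-n\lambda\theta}\sum_{\mu=0}^n 2^{\mu(r+\lambda)\theta}\wpar{2^{-\mu}}^\theta\biggr)^{1/\theta}.
\]
For an arbitrary $\delta\in(0,1)$ I would then pick $n$ with $2^{-(n+1)}<\delta\le2^{-n}$ and use monotonicity of the two integrals in $\delta$ together with the doubling properties~(5)--(6) of $\varphi$ (which yield $\varphi(\delta)\asymp\varphi(2^{-n})\asymp\varphi(2^{-(n+1)})$) to conclude that condition~(3) holds for \emph{every} $\delta\in(0,1)$ if and only if $S_n\le C\varphi(2^{-n})$ for all $n$; call this last condition~$(\star)$. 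Thus $f\in\Np\iff(\star)$.

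\emph{Step 2 (moduli versus best approximations; the ``only if'' part).} The direct theorem gives $\En[2^\mu]\le C\,\wpar{2^{-\mu}}$, and the inverse theorem combined with monotonicity of the modulus gives
\[
	C^{-1}\,2^{-nk}\sum_{\mu=0}^n 2^{\mu k}\En[2^\mu]\le\wpar{2^{-n}}\le C\,2^{-nk}\sum_{\mu=0}^n 2^{\mu k}\En[2^\mu].
\]
The first of these inequalities shows at once that $S_n$ dominates (a constant times) the same expression with $\En[2^\mu]$ in place of $\wpar{2^{-\mu}}$; together with $(\star)$ this already establishes the necessity part of the lemma.

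\emph{Step 3 (the ``if'' part).} Here I would insert the upper bound $\wpar{2^{-\mu}}\le C\,2^{-\mu k}\sum_{\nu=0}^\mu 2^{\nu k}\En[2^\nu]$ into the two sums defining $S_n^\theta$ and split each inner sum as $\sum_{\nu=0}^{n-1}+\sum_{\nu=n}^{\mu}$. For $0<\theta\le1$ the subadditivity of $t\mapsto t^\theta$ removes the outer power; one then interchanges the order of summation and evaluates two geometric series in $\mu$. For $\theta\ge1$ one first applies H\"older's inequality to extract a convergent geometric factor $2^{\mu k\varepsilon(\theta-1)}$ (for suitably small $\varepsilon>0$) from $\bigl(\sum_{\nu\le\mu}2^{\nu k}\En[2^\nu]\bigr)^\theta$ and then proceeds as before; alternatively one rewrites the dyadic sums as power-weighted sums over all indices and invokes the Hardy-type inequalities of Lemma~\ref{lm:lp}. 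In every case the geometric series converge because $k>r+\lambda$, and after collecting terms one obtains
\[
	S_n^\theta\le C\biggl(\sum_{\mu=n+1}^\infty 2^{\mu r\theta}\En[2^\mu]^\theta+2^{-n\lambda\theta}\sum_{\mu=0}^n 2^{\mu(r+\lambda)\theta}\En[2^\mu]^\theta\biggr).
\]
Hence the assumed bound on the right-hand side gives $S_n\le C\varphi(2^{-n})$, i.e.\ $(\star)$, so $f\in\Np$ by Step~1.

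\emph{Expected main obstacle.} The delicate point should be the bookkeeping in Step~3: after the substitution and the splitting at the index $n$ one must verify that the ``cross'' contributions --- those with $\nu<n\le\mu$, and their analogue in the second sum --- fall into the correct one of the two target sums, and that the argument goes through uniformly in the two regimes $0<\theta\le1$ and $\theta\ge1$. The hypothesis $k>r+\lambda$ (hence $k>r$) is used precisely here, to make the relevant geometric series summable both in the forward direction ($\mu\ge n$) and in the backward direction ($\mu\le n$). Step~1 is routine but must be done carefully: it is exactly the doubling conditions on $\varphi$ that make the reduction from all $\delta\in(0,1)$ to $\delta=2^{-n}$ an equivalence rather than a one-sided implication.
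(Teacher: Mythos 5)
The paper itself offers no proof of this lemma --- it is simply quoted from the reference [Lakovi\'c, 1987] --- so there is no internal argument to compare yours against. Judged on its own, your route is the standard one such a proof takes, and it is essentially sound: Step~1 (dyadic discretization of the two integrals, using $\omega_k(f,2t)_p\le 2^k\w$ and the properties (4)--(6) of $\varphi$ to pass between arbitrary $\delta$ and $\delta=2^{-n}$) is correct, as is Step~3, where the Stechkin-type inverse estimate $\wpar{2^{-\mu}}\le C2^{-\mu k}\sum_{\nu=0}^{\mu}2^{\nu k}\En[2^\nu]$ is inserted and the resulting sums are controlled by Hardy-type estimates; the hypothesis $k>r+\lambda$ is indeed exactly what makes the geometric factors summable and sends the cross terms with $\nu\le n\le\mu$ into the second target sum. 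Note also that Step~1 essentially repeats, in dyadic form, the argument the paper gives for its Theorem~\ref{th:Np-w}.

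One statement in Step~2 is false, though harmlessly so: the left-hand inequality $C^{-1}2^{-nk}\sum_{\mu=0}^{n}2^{\mu k}\En[2^\mu]\le\wpar{2^{-n}}$ is not a theorem. For example, for $p=2$ and $f(x)=\sum_{j\ge0}2^{-jk}\cos 2^j x$ one has $\En[2^\mu][2^\mu]\,$\ldots more precisely $E_{2^\mu}(f)_2\asymp 2^{-\mu k}$, so that $2^{-nk}\sum_{\mu=0}^{n}2^{\mu k}E_{2^\mu}(f)_2\asymp (n+1)2^{-nk}$, while $\omega_k\bigl(f,2^{-n}\bigr)_2\asymp\sqrt{n+1}\,2^{-nk}$; the correct converse of the inverse theorem in $L_p$, $1<p<\infty$, involves $\ell_{\min(p,2)}$-type sums (M.~F.~Timan) and not the plain sum you wrote. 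Fortunately your necessity argument uses only the direct (Jackson) estimate $\En[2^\mu]\le C\wpar{2^{-\mu}}$ and your sufficiency argument uses only the upper (inverse) estimate, so nothing in the proof depends on that two-sided display; simply delete its left half.
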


Proof of the lemma is given in~\cite{lakovic:mat-87}.

\begin{lemma}\label{lm:zygmund}
	Let $f\in L_p$, $\allp$, and
	\begin{displaymath}
		f(x)\sim\sum_{\mu=0}^\infty a_\mu\cos2^\mu x, \quad a_\mu\ge0.
	\end{displaymath}
	The following inequalities hold
	\begin{displaymath}
		\Cn\biggl(\sum_{\mu=0}^\infty a_\mu^2\biggr)^{1/2}\le\|f\|_p
		\le\Cn\biggl(\sum_{\mu=0}^\infty a_\mu^2\biggr)^{1/2},
	\end{displaymath}
	where constants~$\lastC$ and~$\prevC$ do not depend on~$f$.
\end{lemma}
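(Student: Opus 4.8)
The plan is to establish the two-sided estimate $\|f\|_p\asymp A$, where $A:=\bigl(\sum_{\mu=0}^\infty a_\mu^2\bigr)^{1/2}$; this is Zygmund's classical theorem on lacunary trigonometric series, and I would reconstruct it in three moves. First, two of the four required inequalities are essentially free. By orthogonality of the system $\{\cos 2^\mu x\}$ on $[0,2\pi]$,
\[
	\|f\|_2=\sqrt{\pi}\,A ,
\]
and since $[0,2\pi]$ has finite measure, H\"older's inequality gives $\|f\|_p\le(2\pi)^{1/p-1/2}\|f\|_2$ for $1<p\le2$ and $\|f\|_p\ge(2\pi)^{1/p-1/2}\|f\|_2$ for $2\le p<\infty$. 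Hence the upper estimate already holds for $1<p\le 2$ and the lower estimate for $2\le p<\infty$, and it remains to prove the upper estimate for $p>2$ and the lower estimate for $1<p<2$.

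Second, both of these reduce to the single inequality
\[
	\|f\|_{2N}\le C(N)\,A\qquad(N=1,2,\dots).
\]
Indeed, for $p>2$ choose an integer $N$ with $2N\ge p$ and use $\|f\|_p\le(2\pi)^{1/p-1/(2N)}\|f\|_{2N}\le C\,A$. For $1<p<2$ fix any even integer $q=2N>2$ and $\vartheta\in(0,1)$ with $\tfrac12=\tfrac{\vartheta}{p}+\tfrac{1-\vartheta}{q}$; the Lyapunov interpolation inequality $\|f\|_2\le\|f\|_p^{\vartheta}\|f\|_q^{1-\vartheta}$, combined with $\|f\|_2=\sqrt{\pi}\,A$ and $\|f\|_q\le C(N)\,A$, gives $\sqrt{\pi}\,A\le\|f\|_p^{\vartheta}(C(N)\,A)^{1-\vartheta}$, and solving for $\|f\|_p$ yields $\|f\|_p\ge c(p)\,A$.

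Third, to prove $\|f\|_{2N}\le C(N)\,A$ I would expand
\[
	\int_0^{2\pi}f(x)^{2N}\,dx=\sum a_{\mu_1}\cdots a_{\mu_{2N}}\int_0^{2\pi}\cos 2^{\mu_1}x\cdots\cos 2^{\mu_{2N}}x\,dx ,
\]
the sum being over all $2N$-tuples $(\mu_1,\dots,\mu_{2N})$. Writing each cosine in exponential form, the inner integral is at most $2\pi$ in modulus and vanishes unless the frequencies admit a sign-balanced relation $\varepsilon_1 2^{\mu_1}+\dots+\varepsilon_{2N}2^{\mu_{2N}}=0$ with $\varepsilon_i\in\{-1,+1\}$. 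Because the frequencies $\{2^\mu\}$ grow geometrically (and, if one prefers to work with ratio $\ge 4$, one may first split $f$ into its even- and odd-indexed sub-sums and estimate each), the uniqueness of binary expansions forces any such tuple to be, up to a bounded number of exceptional entries, a union of pairs of equal indices; a counting of these tuples then bounds the whole sum by $C(N)A^{2N}$, which is the asserted inequality. The main obstacle is precisely this combinatorial step — showing that sign-balanced relations among geometrically growing integers force near-pairing of the indices with multiplicities controlled solely by $N$ (equivalently, that $\{2^\mu\}$ is a $\Lambda(q)$-set for every $q$); alternatively, since $1<p<\infty$, one may bypass the combinatorics and deduce $\|f\|_p\asymp\bigl\|\bigl(\sum_\mu a_\mu^2\cos^2 2^\mu x\bigr)^{1/2}\bigr\|_p$ from the Littlewood--Paley theorem, after which the elementary bounds $\bigl(\sum_\mu a_\mu^2\cos^2 2^\mu x\bigr)^{1/2}\le A$ and $\sum_\mu a_\mu^2\cos^2 2^\mu x=\tfrac12A^2+\tfrac12\sum_\mu a_\mu^2\cos 2^{\mu+1}x$ (together with a further application of the case $p\ge 2$) close the argument.
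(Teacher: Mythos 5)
The paper does not actually prove this lemma: it is stated with a reference to Zygmund (\emph{Trigonometric Series}, vol.~I, p.~326), so there is no in-paper argument to compare against, and what you have written is a reconstruction of precisely that classical theorem on lacunary series. Your skeleton is the standard one and is sound: orthogonality gives $\|f\|_2=\sqrt{\pi}\,A$; H\"older on the finite measure space settles the upper bound for $p\le2$ and the lower bound for $p\ge2$; the even moments $\|f\|_{2N}\le C(N)A$ give the upper bound for all $p$; and Lyapunov interpolation between $p$, $2$ and an even exponent reverses the inequality for $1<p<2$. The Littlewood--Paley alternative you mention is also a legitimate (and cleaner) route, since each dyadic block $[2^\mu,2^{\mu+1})$ contains exactly one term of the series, so the square function is $S(x)=\bigl(\sum_\mu a_\mu^2\cos^22^\mu x\bigr)^{1/2}$.

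Two steps need repair before this is complete. First, the combinatorial step is underestimated: splitting into subsequences of ratio $\ge4$ only handles the fourth moment. A signed relation $\varepsilon_1\lambda_{\mu_1}+\dots+\varepsilon_{2N}\lambda_{\mu_{2N}}=0$ is forced to cancel in pairs only when the gap ratio exceeds roughly $2N-1$ (the largest frequency must otherwise dominate the sum of the remaining ones), so for the $2N$-th moment you must split the powers of $2$ into about $\log_2 N$ residue classes, or else carry out the more delicate ``near-pairing'' count at ratio $2$; alternatively, the Littlewood--Paley route avoids this entirely, and its lower bound closes not via your displayed identity for $S^2$ but simply from $\|S\|_2^2=\pi A^2$, $S\le A$ pointwise, and $\|S\|_2\le\|S\|_p^{\vartheta}\|S\|_\infty^{1-\vartheta}$. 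Second, your lower bound for $1<p<2$ divides through by $\|f\|_q\le C(N)A$, which tacitly assumes $A<\infty$; but for $p<2$ the finiteness of $A$ is part of what the lemma asserts about an $L_p$ function, so the argument as written is circular in the case that matters. The standard fix is to run the interpolation on the dyadic partial sums $S_M$ (trigonometric polynomials, for which all norms are finite), obtaining $\|S_M\|_p\ge c\bigl(\sum_{\mu\le M}a_\mu^2\bigr)^{1/2}$, and then to use the uniform $L_p$-boundedness of the dyadic partial-sum projections for $1<p<\infty$ (Riesz projection) to get $\|S_M\|_p\le C_p\|f\|_p$ and let $M\to\infty$; the same truncation device also closes the Littlewood--Paley version. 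With these two repairs your proof is exactly the classical argument the paper cites.
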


Proof of the lemma is due to
Zygmund~\cite[vol.~I, p.~326]{zygmund:trigonometric}.

\begin{corollary}\label{cr:zygmund}
	Lemma~\ref{lm:zygmund} yields the following
	estimate
	%~\cite{potapov:uch-58}
	\begin{displaymath}
		\Cn\biggl(\sum_{\mu=n}^\infty a_\mu^2\biggr)^{1/2}\le\En[2^n]
		\le\Cn\biggl(\sum_{\mu=n}^\infty a_\mu^2\biggr)^{1/2},
	\end{displaymath}
	where constants~$\lastC$ and~$\prevC$ do not depend on~$n$ and~$f$.
\end{corollary}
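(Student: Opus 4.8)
The plan is to establish that $\En[2^n]$ is equivalent, uniformly in~$n$ and~$f$, to $\bigl\|\sum_{\mu=n}^{\infty}a_\mu\cos 2^\mu x\bigr\|_p$, and then to read off the corollary from Lemma~\ref{lm:zygmund}. A preliminary remark is needed: Lemma~\ref{lm:zygmund} is stated for series indexed from $\mu=0$, so to apply it to the tail $\sum_{\mu\ge n}a_\mu\cos 2^\mu x$ I would first write $\sum_{\mu=n}^{\infty}a_\mu\cos 2^\mu x=\sum_{j=0}^{\infty}a_{n+j}\cos 2^{j}(2^{n}x)$ and use that the substitution $y=2^{n}x$ leaves the $\Lp$ norm of a $2\pi$-periodic function unchanged; hence Lemma~\ref{lm:zygmund} gives $C_1\bigl(\sum_{\mu\ge n}a_\mu^{2}\bigr)^{1/2}\le\bigl\|\sum_{\mu=n}^{\infty}a_\mu\cos 2^\mu x\bigr\|_p\le C_2\bigl(\sum_{\mu\ge n}a_\mu^{2}\bigr)^{1/2}$, with $C_1,C_2$ depending only on~$p$ (and on the fixed gap ratio~$2$), in particular not on~$n$.

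For the \emph{upper estimate} I would take the trigonometric polynomial $S_n(x)=\sum_{\mu=0}^{n-1}a_\mu\cos 2^\mu x$, which has degree $2^{n-1}<2^{n}$; then $\En[2^n]\le\|f-S_n\|_p=\bigl\|\sum_{\mu=n}^{\infty}a_\mu\cos 2^\mu x\bigr\|_p\le C_2\bigl(\sum_{\mu\ge n}a_\mu^{2}\bigr)^{1/2}$ by the preliminary remark. For the \emph{lower estimate} I would use the boundedness on $\Lp$, $\allp$, of the partial-sum operator $S_{2^{n}-1}$, with operator norm bounded by a constant $A_p$ independent of~$n$ (the M.~Riesz theorem). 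Since the series of~$f$ is lacunary, $S_{2^{n}-1}(f)=S_n$, while $S_{2^{n}-1}(T)=T$ for every polynomial~$T$ of degree at most $2^{n}-1$; therefore $f-S_n=(I-S_{2^{n}-1})(f-T)$ and $\|f-S_n\|_p\le(1+A_p)\|f-T\|_p$. Taking the infimum over such~$T$ gives $\bigl\|\sum_{\mu=n}^{\infty}a_\mu\cos 2^\mu x\bigr\|_p=\|f-S_n\|_p\le(1+A_p)\En[2^n]$, which together with the lower bound in the preliminary remark yields $\En[2^n]\ge\frac{C_1}{1+A_p}\bigl(\sum_{\mu\ge n}a_\mu^{2}\bigr)^{1/2}$.

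I expect no serious obstacle: the only delicate points are that all constants produced by Zygmund's inequality and by the M.~Riesz theorem are independent of~$n$, and that the shift of the summation index in Lemma~\ref{lm:zygmund} is absorbed by the norm-preserving dilation $y=2^{n}x$; the rest is bookkeeping. If one prefers to avoid invoking the $\Lp$-boundedness of partial sums, the lower bound can instead be obtained by applying a de~la~Vall\'ee~Poussin operator $V_{2^{n}}$ (whose norm on $\Lp$ is bounded by an absolute constant) to reproduce~$T$ and extract $\sum_{\mu\ge n+1}a_\mu\cos 2^\mu x$, and by estimating the single coefficient~$a_n$ separately through $|a_n|\le\pi^{-1}\|f-T\|_1\le C\En[2^n]$, since $\bigl(a_n^2+\sum_{\mu\ge n+1}a_\mu^2\bigr)^{1/2}\le\sqrt2\,\max\bigl(|a_n|,(\sum_{\mu\ge n+1}a_\mu^2)^{1/2}\bigr)$.
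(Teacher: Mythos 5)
Your proof is correct and fills in exactly the standard argument the paper leaves implicit behind ``Lemma~\ref{lm:zygmund} yields'': the upper bound by taking the lacunary partial sum $S_n$ of degree $2^{n-1}<2^n$ together with Zygmund's inequality, and the lower bound via the uniform $L_p$-boundedness ($\allp$) of the Fourier partial-sum projection $S_{2^n-1}$, which reproduces competitors $T$ and maps $f$ to $S_n$, with all constants depending only on~$p$. The only simplification worth noting is that the rescaling $y=2^nx$ is unnecessary: Lemma~\ref{lm:zygmund} applies to the tail $\sum_{\mu=n}^\infty a_\mu\cos2^\mu x$ directly, since it is itself a series of the admissible form with the coefficients for $\mu<n$ set to zero.
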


\section{Proofs}

Now we prove our results.

\begin{proof}[Proof of Theorem~\ref{th:Np-w}]
	\setcounter{const}{0}
	Put
	\begin{displaymath}
		I_1=\int_0^\frac1{n+1} t^{-r\theta-1}\w^\theta\,dt, \quad
		I_2=\int_\frac1{n+1}^1 t^{-(r+\lambda)\theta-1}\w^\theta\,dt.
	\end{displaymath}
	We have~\cite[p.~55]{hardy-l-p:inequalities}
	\begin{multline*}
		I_1=\int_0^\frac1{n+1} t^{-r\theta-1}\w^\theta\,dt
			=\sum_{\nu=n+1}^\infty
				\int_\frac1{\nu+1}^\frac1\nu t^{-r\theta-1}\w^\theta\,dt\\
		\le\sum_{\nu=n+1}^\infty \wpar{\frac1\nu}^\theta
				\int_\frac1{\nu+1}^\frac1\nu t^{-r\theta-1}\,dt
			\le\Cn\sum_{\nu=n+1}^\infty \wpar{\frac1\nu}^\theta
				\nu^{r\theta-1}
	\end{multline*}
	and,
	taking into account properties
	of modulus of smoothness~\cite{timan:approximation},
	%\cite[p.~116]{timan:priblizheniya},
	\begin{displaymath}
		I_1\ge\sum_{\nu=n+1}^\infty \wpar{\frac1{\nu+1}}^\theta
				\int_\frac1{\nu+1}^\frac1\nu t^{-r\theta-1}\,dt
			\ge\Cn\sum_{\nu=n+1}^\infty \wpar{\frac1\nu}^\theta
				\nu^{r\theta-1}.
	\end{displaymath}

	In an analogous way we estimate
	\begin{displaymath}
		I_2\le\sum_{\nu=1}^n \wpar{\frac1\nu}^\theta
				\int_\frac1{\nu+1}^\frac1\nu t^{-(r+\lambda)\theta-1}\,dt
			\le\Cn\sum_{\nu=1}^n \wpar{\frac1\nu}^\theta
				\nu^{(r+\lambda)\theta-1}
	\end{displaymath}
	and
	\begin{displaymath}
		I_2\ge\sum_{\nu=1}^n \wpar{\frac1{\nu+1}}^\theta
				\int_\frac1{\nu+1}^\frac1\nu t^{-(r+\lambda)\theta-1}\,dt
			\ge\Cn\sum_{\nu=1}^n \wpar{\frac1\nu}^\theta
				\nu^{(r+\lambda)\theta-1}.
	\end{displaymath}

	Let $f\in\Np$.
	For a positive integer~$n$
	we put $\delta=\frac1{n+1}$.
	Then we have
	\begin{multline*}
		I^\theta=I_1+\delta^{\lambda\theta}I_2\\
		\ge\Cn\biggl(
			\sum_{\nu=n+1}^\infty \wpar{\frac1\nu}^\theta \nu^{r\theta-1}
			+n^{-\lambda\theta}\sum_{\nu=1}^n \wpar{\frac1\nu}^\theta
				\nu^{(r+\lambda)\theta-1}
		\biggr).
	\end{multline*}
	Hence we obtain
	\begin{multline*}
		J=\biggl(
				\sum_{\nu=n+1}^\infty \wpar{\frac1\nu}^\theta \nu^{r\theta-1}
				+n^{-\lambda\theta}\sum_{\nu=1}^n \wpar{\frac1\nu}^\theta
					\nu^{(r+\lambda)\theta-1}
			\biggr)^{1/\theta}\\
		\le\Cn I\le\Cn\varphi(\delta)=\lastC\varphi\left(\frac1{n+1}\right)
			\le\Cn\varphi\left(\frac1n\right),
	\end{multline*}
	which proves inequality~\eqref{eq:Np-w}.

	Now we suppose that inequality~\eqref{eq:Np-w} holds.
	For $\delta\in(0,1)$ we choose the positive integer~$n$
	satisfying $\frac1{n+1}<\delta\le\frac1n$.
	Then,
	taking into consideration the estimates from above
	for~$I_1$ and~$I_2$
	we have
	\begin{multline*}
		I^\theta=\int_0^\frac1{n+1} t^{-r\theta-1}\w^\theta\,dt
			+\int_\frac1{n+1}^\delta t^{-r\theta-1}\w^\theta\,dt\\
		+\delta^{\lambda\theta}
				\int_\delta^1 t^{-(r+\lambda)\theta-1}\w^\theta\,dt
			\le I_1+\delta^{\lambda\theta}I_2\\
		\le\Cn\biggl(
			\sum_{\nu=n+1}^\infty \wpar{\frac1\nu}^\theta \nu^{r\theta-1}
			+n^{-\lambda\theta}\sum_{\nu=1}^n \wpar{\frac1\nu}^\theta
				\nu^{(r+\lambda)\theta-1}
		\biggr).
	\end{multline*}
	Hence
	\begin{displaymath}
		I\le\Cn J\le\Cn\varphi\left(\frac1n\right)
		\le\Cn\varphi\left(\frac1{2n}\right)\le\Cn\varphi(\delta),
	\end{displaymath}
	implying $f\in\Np$.

	Proof of Theorem~\ref{th:Np-w} is completed.
\end{proof}

\begin{proof}[Proof of Theorem~\ref{th:Np-monotone}]
	\setcounter{const}{0}
	Theorem~\ref{th:Np-w} implies that the condition $f\in\Np$
	is equivalent to the condition
	\begin{displaymath}
		\sum_{\nu=n+1}^\infty \wpar{\frac1\nu}^\theta \nu^{r\theta-1}
			+n^{-\lambda\theta}\sum_{\nu=1}^n \wpar{\frac1\nu}^\theta
				\nu^{(r+\lambda)\theta-1}
		\le\Cn\varphi\left(\frac 1n\right)^\theta,
	\end{displaymath}
	where constant~$\lastC$ does not depend on~$n$.
	Lemma~\ref{lm:anu-w} yields that the last estimate
	is equivalent to the estimate
	%~\cite[p.~31]{bari:trigonometricheskie}
	\begin{multline*}
		\sum_{\nu=n+1}^\infty \nu^{(r-k)\theta-1}
				\biggl(
					\sum_{\mu=1}^\nu a_\mu^p \mu^{(k+1)p-2}
				\biggr)^{\theta/p}
			+\sum_{\nu=n+1}^\infty \nu^{r\theta-1}
				\biggl(
				  \sum_{\mu=\nu}^\infty a_\mu^p \mu^{p-2}
				\biggr)^{\theta/p}\\
		+n^{-\lambda\theta}\sum_{\nu=1}^n \nu^{(r+\lambda-k)\theta-1}
				\biggl(
				  \sum_{\mu=1}^\nu a_\mu^p \mu^{(k+1)p-2}
				\biggr)^{\theta/p}\\
		+n^{-\lambda\theta}\sum_{\nu=1}^n \nu^{(r+\lambda)\theta-1}
				\biggl(
					\sum_{\mu=\nu}^\infty a_\mu^p \mu^{p-2}
				\biggr)^{\theta/p}
			\le\Cn\varphi\left(\frac 1n\right)^\theta,
	\end{multline*}
	where constant~$\lastC$ does not depend on~$n$.
	Hence,
	if we denote the terms on the left-hand side of the inequality
	by~$J_1$, $J_2$, $J_3$ and~$J_4$ respectively,
	then condition $f\in\Np$ is equivalent to the condition
	\begin{equation}\label{eq:J-phi}
		J_1+J_2+J_3+J_4\le\lastC\varphi\left(\frac 1n\right)^\theta.
	\end{equation}

	Now we estimate the terms~$J_1$, $J_2$, $J_3$ and~$J_4$
	from below and above
	by means of expression taking part in the condition of the theorem.

	First we estimate~$J_1$ and~$J_2$ from below.
	We have
	\begin{multline*}
		J_1=\sum_{\nu=n+1}^\infty \nu^{(r-k)\theta-1}
				\biggl(
				  \sum_{\mu=1}^\nu a_\mu^p \mu^{(k+1)p-2}
				\biggr)^{\theta/p}\\
		\ge\sum_{\nu=n+1}^\infty \nu^{-(k-r)\theta-1}
				\biggl(
				  \sum_{\mu=n+1}^\nu a_\mu^p \mu^{(k+1)p-2}
				\biggr)^{\theta/p}.
	\end{multline*}
	For $k-r>0$,
	making use of Lemmas~\ref{lm:lp} and~\ref{lm:lp-converse}
	we obtain
	\begin{multline}\label{eq:J1ge}
		J_1\ge\Cn\sum_{\nu=4(n+1)}^\infty \nu^{-(k-r)\theta-1}
			(a_\nu^p \nu^{(k+1)p-2 \nu})^{\theta/p}\\
		=\lastC\sum_{\nu=4(n+1)}^\infty a_\nu^\theta
			\nu^{r\theta+\theta-\theta/p-1}.
	\end{multline}
	In an analogous way,
	for $r\theta>0$ we get
	\begin{equation}\label{eq:J2ge}
		J_2=\sum_{\nu=n+1}^\infty \nu^{r\theta-1}
			\biggl(
				\sum_{\mu=\nu}^\infty a_\mu^p \mu^{p-2}
			\biggr)^{\theta/p}
		\ge\Cn\sum_{\nu=8(n+1)}^\infty a_\nu^\theta
		  \nu^{r\theta+\theta-\theta/p-1}.
	\end{equation}

	We estimate the term~$J_2$ from above:
	\begin{equation}\label{eq:J2le}
		J_2\le\Cn\sum_{\nu=\left[\frac{n+1}4\right]}^\infty \nu^{r\theta-1}
			(a_\nu^p \nu^{p-2 \nu})^{\theta/p}
		=\lastC\sum_{\nu=\left[\frac{n+1}4\right]}^\infty a_\nu^\theta
			\nu^{r\theta+\theta-\theta/p-1}.
	\end{equation}

	For~$J_1$ we have
	\begin{multline*}
		J_1\le\Cn\biggl(\sum_{\nu=n+1}^\infty \nu^{-(k-r)\theta-1}
				\biggl(
				  \sum_{\mu=n+1}^\nu a_\mu^p \mu^{(k+1)p-2}
				\biggr)^{\theta/p}\\
		+\sum_{\nu=n+1}^\infty \nu^{-(k-r)\theta-1}
				\biggl(
				  \sum_{\mu=1}^n a_\mu^p \mu^{(k+1)p-2}
				\biggr)^{\theta/p}\biggr),
	\end{multline*}
	and applying once more Lemmas~\ref{lm:lp} and~\ref{lm:lp-converse}
	we obtain
	\begin{equation}\label{eq:J1}
		J_1\le\Cn\sum_{\nu=\left[\frac{n+1}4\right]}^\infty a_\nu^\theta
				\nu^{r\theta+\theta-\theta/p-1}
			+n^{-(k-r)\theta}
				\biggl(
					\sum_{\mu=1}^n a_\mu^p \mu^{(k+1)p-2}
				\biggr)^{\theta/p}.
	\end{equation}
	Put
	\begin{displaymath}
		I_1=n^{-(k-r)\theta}\sum_{\mu=1}^n a_\mu^p \mu^{(k+1)p-2}.
	\end{displaymath}
	Then for
	\begin{displaymath}
		I_2=I_1n^{(k-r)\theta},
	\end{displaymath}
	taking into account that $(k+1)p-2\ge0$ and $a_\nu\downarrow0$
	we get
	\begin{multline*}
		I_2=\sum_{\mu=1}^n a_\mu^p \mu^{(k+1)p-2}
		\le\sum_{\mu=1}^{\left[\frac n2\right]}
				a_\mu^p \mu^{(k+1)p-2}
			+a_{\left[\frac n2\right]+1}^p
				\sum_{\mu=\left[\frac n2\right]+1}^n \mu^{(k+1)p-2}\\
		\le\sum_{\mu=1}^{\left[\frac n2\right]}
				a_\mu^p \mu^{(k+1)p-2}
			+\Cn n^{(k+1)p-1}a_{\left[\frac n2\right]+1}^p
		\le\Cn\sum_{\mu=1}^{\left[\frac n2\right]} a_\mu^p \mu^{(k+1)p-2}.
	\end{multline*}
	Since $k-r-\lambda>0$,
	we have
	\begin{multline*}
		I_1^{\theta/p}\le\Cn n^{-(k-r)\theta}
			\biggl(
				\sum_{\mu=1}^{\left[\frac n2\right]} a_\mu^p \mu^{(k+1)p-2}
			\biggr)^{\theta/p}\\
		\le\Cn n^{-\lambda\theta}\sum_{\nu=\left[\frac n2\right]}^n
			\nu^{-(k-r-\lambda)\theta-1}
			\biggl(
				\sum_{\mu=1}^\nu a_\mu^p \mu^{(k+1)p-2}
			\biggr)^{\theta/p}\\
		\le\lastC n^{-\lambda\theta}\sum_{\nu=1}^n
			\nu^{-(k-r-\lambda)\theta-1}
			\biggl(
				\sum_{\mu=1}^\nu a_\mu^p \mu^{(k+1)p-2}
			\biggr)^{\theta/p}.
	\end{multline*}
	Applying Lemma~\ref{lm:lp-asymp} we obtain
	\begin{multline*}
		I_1^{\theta/p}\le\Cn n^{-\lambda\theta}
			\sum_{\nu=1}^n	\nu^{-(k-r-\lambda)\theta-1}
			(a_\nu^p \nu^{(k+1)p-2} \nu)^{\theta/p}\\
		=\lastC n^{-\lambda\theta} \sum_{\nu=1}^n a_\nu^\theta
			\nu^{(r+\lambda)\theta+\theta-\theta/p-1}.
	\end{multline*}
	From~\eqref{eq:J1} it follows that
	\begin{equation}\label{eq:J1le}
		J_1\le\Cn\biggl(
			\sum_{\nu=\left[\frac{n+1}4\right]}^\infty a_\nu^\theta
					\nu^{r\theta+\theta-\theta/p-1}
				+n^{-\lambda\theta}\sum_{\nu=1}^n a_\nu^\theta
					\nu^{(r+\lambda)\theta+\theta-\theta/p-1}
		\biggr).
	\end{equation}

	This way,
	inequalities~\eqref{eq:J1ge}, \eqref{eq:J2ge},
	\eqref{eq:J2le} and~\eqref{eq:J1le}
	yield
	\begin{multline}\label{eq:J1-J2}
		\Cn\sum_{\nu=8(n+1)}^\infty
				a_\nu^\theta
				\nu^{r\theta+\theta-\theta/p-1}
			\le J_1+J_2\\
		\le\Cn\biggl(
				\sum_{\nu=\left[\frac{n+1}4\right]}^\infty
					a_\nu^\theta
					\nu^{r\theta+\theta-\theta/p-1}
				+n^{-\lambda\theta}\sum_{\nu=1}^n a_\nu^\theta
					\nu^{(r+\lambda)\theta+\theta-\theta/p-1}
			\biggr).
	\end{multline}

	Now we estimate~$J_3$ and~$J_4$. Put
	\begin{displaymath}
		A_1=n^{\lambda\theta}J_3
		=\sum_{\nu=1}^n \nu^{(r+\lambda-k)\theta-1}
			\biggl(
				\sum_{\mu=1}^\nu a_\mu^p \mu^{(k+1)p-2}
			\biggr)^{\theta/p}
	\end{displaymath}
	and
	\begin{displaymath}
		A_2
		=n^{\lambda\theta}J_4=\sum_{\nu=1}^n \nu^{(r+\lambda)\theta-1}
			\biggl(
				\sum_{\mu=\nu}^\infty a_\mu^p \mu^{p-2}
			\biggr)^{\theta/p},
	\end{displaymath}
	applying Lemma~\ref{lm:lp-asymp} for $r+\lambda-k<0$
	we get
	\begin{equation}\label{eq:A1}
		A_1
		\le\Cn\sum_{\nu=1}^n a_\nu^\theta
			\nu^{(r+\lambda)\theta+\theta-\theta/p-1}.
	\end{equation}

	We estimate~$A_2$ in an analogous way:
	\begin{multline}\label{eq:A2}
		A_2\le\Cn\biggl(
				\sum_{\nu=1}^n \nu^{(r+\lambda)\theta-1}
					\biggl(
					  \sum_{\mu=\nu}^n a_\mu^p \mu^{p-2}
					\biggr)^{\theta/p}\\
		+\sum_{\nu=1}^n \nu^{(r+\lambda)\theta-1}
				\biggl(
				  \sum_{\mu=n+1}^\infty a_\mu^p \mu^{p-2}
				\biggr)^{\theta/p}
			\biggr)\\
		\le\Cn\biggl(
				\sum_{\nu=1}^n a_\nu^\theta
					\nu^{(r+\lambda)\theta+\theta-\theta/p-1}
				+n^{(r+\lambda)\theta}
					\biggl(
					  \sum_{\mu=n+1}^\infty a_\mu^p \mu^{p-2}
					\biggr)^{\theta/p}
			\biggr).
	\end{multline}

	We estimate the series
	\begin{displaymath}
		B=\biggl(\sum_{\mu=n+1}^\infty a_\mu^p \mu^{p-2}\biggr)^{\theta/p}.
	\end{displaymath}

	First let $\frac\theta p>1$.
	Applying H\"older inequality we have
	\begin{multline*}
		\sum_{\mu=n+1}^\infty
			a_\mu^p \mu^{p-2}
		\le\biggl(
			\sum_{\mu=n+1}^\infty(a_\mu^p \mu^{p-1+rp-p/\theta})^{\theta/p}
		\biggr)^{p/\theta}\\
		\times\biggl(
			\sum_{\mu=n+1}^\infty
				\mu^{-(rp-p/\theta+1)\theta/(\theta-p)}
		\biggr)^{(\theta-p)/\theta}.
	\end{multline*}
	Since
	$\bigl(rp-\frac p\theta+1\bigr)\frac\theta{\theta-p}
	=rp\frac\theta{\theta-p}+1>1$,
	we get
	\begin{displaymath}
		\sum_{\mu=n+1}^\infty a_\mu^p \mu^{p-2}
		\le\Cn n^{-rp}\biggl(
			\sum_{\mu=n+1}^\infty a_\mu^\theta
					\mu^{\theta-\theta/p+r\theta-1}
		\biggr)^{p/\theta}.
	\end{displaymath}
	So, for $\frac\theta p>1$ we have proved that
	\begin{displaymath}
		B\le\Cn n^{-r\theta}\sum_{\mu=n+1}^\infty a_\mu^\theta
			\mu^{r\theta+\theta-\theta/p-1}.
	\end{displaymath}

	Let $\frac\theta p\le1$.
	For given~$n$ we choose the positive integer~$N$
	such that $2^N\le n+1<2^{N+1}$.
	Then we have
	\begin{multline*}
		B\le\biggl(\sum_{\mu=2^N}^\infty a_\mu^p \mu^{p-2}\biggr)^{\theta/p}
			\le\biggl(
				\sum_{\nu=N}^\infty a_{2^\nu}^p
				\sum_{\mu=2^\nu}^{2^{\nu+1}-1}\mu^{p-2}
			\biggr)^{\theta/p}\\
		\le\Cn\biggl(
			\sum_{\nu=N}^\infty a_{2^\nu}^p 2^{\nu(p-1)}
		\biggr)^{\theta/p}.
	\end{multline*}
	Making use of Lemma~\ref{lm:jensen}
	we obtain
	\begin{multline*}
		B\le\lastC\sum_{\nu=N}^\infty a_{2^\nu}^\theta
			2^{\nu(\theta-\theta/p)}
		\le\Cn\sum_{\nu=N}^\infty \sum_{\mu=2^{\nu-1}}^{2^\nu-1}
			a_\mu^\theta \mu^{\theta-\theta/p-1}\\
		=\lastC\sum_{\nu=2^{N-1}}^\infty a_\nu^\theta
			\nu^{\theta-\theta/p-1}
		\le\lastC\sum_{\nu=\left[\frac{n+1}4\right]}^\infty
			a_\nu^\theta \nu^{\theta-\theta/p-1}\\
		\le\lastC\left[\frac{n+1}4\right]^{-r\theta}
			\sum_{\nu=\left[\frac{n+1}4\right]}^\infty a_\nu^\theta
			\nu^{r\theta+\theta-\theta/p-1}.
	\end{multline*}
	Since for $n\ge3$ holds $\left[\frac{n+1}4\right]\ge\frac n{12}$,
	we get
	\begin{displaymath}
		B\le\Cn n^{-r\theta}
			\sum_{\nu=\left[\frac{n+1}4\right]}^\infty a_\nu^\theta
			\nu^{r\theta+\theta-\theta/p-1}.
	\end{displaymath}

	This way, for $0<\frac\theta p<\infty$ we proved that
	\begin{displaymath}
		B\le\Cn n^{-r\theta}
			\sum_{\nu=\left[\frac{n+1}4\right]}^\infty a_\nu^\theta
			\nu^{r\theta+\theta-\theta/p-1}.
	\end{displaymath}
	Hence~\eqref{eq:A2} yields
	\begin{displaymath}
		A_2
		\le\Cn\biggl(
			\sum_{\nu=1}^n a_\nu^\theta
				\nu^{(r+\lambda)\theta+\theta-\theta/p-1}
			+n^{\lambda\theta}\sum_{\nu=\left[\frac{n+1}4\right]}^\infty
				a_\nu^\theta \nu^{r\theta+\theta-\theta/p-1}
		\biggr).
	\end{displaymath}

	Now, from~\eqref{eq:A1} it follows that
	\begin{multline}\label{eq:J3-J4}
		J_3+J_4=n^{-\lambda\theta}(A_1+A_2)\\
		\le\Cn\biggl(
				n^{-\lambda\theta}\sum_{\nu=1}^n a_\nu^\theta
					\nu^{(r+\lambda)\theta+\theta-\theta/p-1}
				+\sum_{\nu=\left[\frac{n+1}4\right]}^\infty
					a_\nu^\theta \nu^{r\theta+\theta-\theta/p-1}
		\biggr).
	\end{multline}

	Further, we estimate the series
	\begin{displaymath}
		A_3=\sum_{\nu=\left[\frac{n+1}4\right]}^\infty
			a_\nu^\theta \nu^{r\theta+\theta-\theta/p-1}
		=A_4+\sum_{\nu=n+1}^\infty	a_\nu^\theta
			\nu^{r\theta+\theta-\theta/p-1},
	\end{displaymath}
	where is
	\begin{multline*}
		A_4=\sum_{\nu=\left[\frac{n+1}4\right]}^n
				a_\nu^\theta \nu^{r\theta+\theta-\theta/p-1}
			\le\Cn a_{\left[\frac{n+1}4\right]}^\theta
				n^{r\theta+\theta-\theta/p}\\
		\le\Cn n^{-\lambda\theta}
				\sum_{\nu=1}^{\left[\frac{n+1}4\right]} a_\nu^\theta
					\nu^{(r+\lambda)\theta+\theta-\theta/p-1}
		\le\lastC n^{-\lambda\theta}
				\sum_{\nu=1}^n a_\nu^\theta
					\nu^{(r+\lambda)\theta+\theta-\theta/p-1}.
	\end{multline*}
	Hence
	\begin{equation}\label{eq:A3}
		A_3\le\Cn\biggl(
			n^{-\lambda\theta}\sum_{\nu=1}^n a_\nu^\theta
				\nu^{(r+\lambda)\theta+\theta-\theta/p-1}
			+\sum_{\nu=n+1}^\infty	a_\nu^\theta
				\nu^{r\theta+\theta-\theta/p-1}
		\biggr).
	\end{equation}

	Making use of~\eqref{eq:A3} and~\eqref{eq:J3-J4}
	we have
	\begin{displaymath}
		J_3+J_4\le\Cn\biggl(
			n^{-\lambda\theta}\sum_{\nu=1}^n a_\nu^\theta
				\nu^{(r+\lambda)\theta+\theta-\theta/p-1}
			+\sum_{\nu=n+1}^\infty	a_\nu^\theta
				\nu^{r\theta+\theta-\theta/p-1}
		\biggr).
	\end{displaymath}
	Hence, applying~\eqref{eq:A3} in~\eqref{eq:J1-J2}
	we obtain
	\begin{multline}\label{eq:J1-J2-J3-J4le}
		J_1+J_2+J_3+J_4\\
		\le\Cn\biggl(
			n^{-\lambda\theta}\sum_{\nu=1}^n a_\nu^\theta
				\nu^{(r+\lambda)\theta+\theta-\theta/p-1}
			+\sum_{\nu=n+1}^\infty	a_\nu^\theta
				\nu^{r\theta+\theta-\theta/p-1}
		\biggr).
	\end{multline}

	Now we estimate~$A_1$ and~$A_2$ from below.
	Making use of Lemma~\ref{lm:lp-asymp}
	we get
	\begin{displaymath}
		A_1\ge\Cn\sum_{\nu=1}^n a_\nu^\theta
			\nu^{(r+\lambda)\theta+\theta-\theta/p-1},
	\end{displaymath}
	and in an analogous way
	\begin{displaymath}
		A_2\ge\sum_{\nu=1}^n \nu^{(r+\lambda)\theta-1}
			\biggl(
				\sum_{\mu=\nu}^n a_\mu^p \mu^{p-2}
			\biggr)^{\theta/p}
		\ge\Cn\sum_{\nu=1}^n a_\nu^\theta
			\nu^{(r+\lambda)\theta+\theta-\theta/p-1}.
	\end{displaymath}
	Hence
	\begin{displaymath}
		A_1+A_2\ge\Cn\sum_{\nu=1}^n a_\nu^\theta
			\nu^{(r+\lambda)\theta+\theta-\theta/p-1}.
	\end{displaymath}
	This way the following inequality holds
	\begin{displaymath}
		J_3+J_4\ge\Cn n^{-\lambda\theta}\sum_{\nu=1}^n a_\nu^\theta
			\nu^{(r+\lambda)\theta+\theta-\theta/p-1}.
	\end{displaymath}

	From~\eqref{eq:J1-J2} it follows that
	\begin{multline}\label{eq:J1-J2-J3-J4ge}
		J_1+J_2+J_3+J_4\\
		\ge\Cn\biggl(
			\sum_{\nu=8(n+1)}^\infty a_\nu^\theta
				\nu^{r\theta+\theta-\theta/p-1}
			+n^{-\lambda\theta}\sum_{\nu=1}^n a_\nu^\theta
				\nu^{(r+\lambda)\theta+\theta-\theta/p-1}
		\biggr).
	\end{multline}
	Since
	\begin{multline*}
		\sum_{\nu=n+1}^{\nu=8(n+1)-1} a_\nu^\theta
				\nu^{r\theta+\theta-\theta/p-1}
			\le\Cn a_n^\theta	n^{r\theta+\theta-\theta/p}\\
		\le\Cn n^{-\lambda\theta}\sum_{\nu=1}^n a_\nu^\theta
				\nu^{(r+\lambda)\theta+\theta-\theta/p-1}
	\end{multline*}
	holds,
	we have
	\begin{multline*}
		\sum_{\nu=n+1}^\infty a_\nu^\theta
				\nu^{r\theta+\theta-\theta/p-1}
			+n^{-\lambda\theta}\sum_{\nu=1}^n a_\nu^\theta
				\nu^{(r+\lambda)\theta+\theta-\theta/p-1}\\
		\le\Cn\biggl(
			\sum_{\nu=8(n+1)}^\infty a_\nu^\theta
				\nu^{r\theta+\theta-\theta/p-1}
			+n^{-\lambda\theta}\sum_{\nu=1}^n a_\nu^\theta
				\nu^{(r+\lambda)\theta+\theta-\theta/p-1}
		\biggr).
	\end{multline*}

	Now,
	estimates~\eqref{eq:J1-J2-J3-J4ge} and~\eqref{eq:J1-J2-J3-J4le}
	imply
	\begin{multline*}
		\Cn\biggl(
			\sum_{\nu=n+1}^\infty a_\nu^\theta
				\nu^{r\theta+\theta-\theta/p-1}
			+n^{-\lambda\theta}\sum_{\nu=1}^n a_\nu^\theta
				\nu^{(r+\lambda)\theta+\theta-\theta/p-1}
		\biggr)\\
		\le J_1+J_2+J_3+J_4\\
		\le\Cn\biggl(
			\sum_{\nu=n+1}^\infty a_\nu^\theta
				\nu^{r\theta+\theta-\theta/p-1}
			+n^{-\lambda\theta}\sum_{\nu=1}^n a_\nu^\theta
				\nu^{(r+\lambda)\theta+\theta-\theta/p-1}
		\biggr).
	\end{multline*}

	This way we proved that condition~\eqref{eq:Np-w}
	is equivalent to the condition of the theorem.
	Since condition~\eqref{eq:Np-w}
	is equivalent to the condition $f\in\Np$,
	proof of Theorem~\ref{th:Np-monotone} is completed.
\end{proof}

\begin{proof}[Proof of Theorem~\ref{th:Np-lacunary}]
	Considering Lemma~\ref{lm:Np-E}, condition $f\in\Np$ is equivalent
	to the condition
	\begin{displaymath}
		\sum_{\nu=n+1}^\infty 2^{\nu r\theta}\En[2^\nu]^\theta
			+2^{-n\lambda\theta}
				\sum_{\nu=0}^n 2^{\nu(r+\lambda)\theta}\En[2^\nu]^\theta
		\le\Cn\varphi\left(\frac 1{2^n}\right)^\theta,
	\end{displaymath}
	where constant~$C$ does not depend on~$n$.
	Corollary~\ref{cr:zygmund} yields that the last estimate is
	equivalent to the estimate
	\begin{equation}\label{eq:condition}
		\sum_{\nu=n+1}^\infty 2^{\nu r\theta}
				\biggl(\sum_{\mu=\nu}^\infty a_\mu^2\biggr)^{\theta/2}
			+2^{-n\lambda\theta}\sum_{\nu=0}^n 2^{\nu(r+\lambda)\theta}
				\biggl(\sum_{\mu=\nu}^\infty a_\mu^2\biggr)^{\theta/2}
		\le\Cn\varphi\left(\frac 1{2^n}\right)^\theta,
	\end{equation}
	where constant~$\lastC$ does not depend on~$n$.

	Put
	\begin{displaymath}
		J_1=\sum_{\nu=n+1}^\infty 2^{\nu r\theta}
				\biggl(\sum_{\mu=\nu}^\infty a_\mu^2\biggr)^{\theta/2},
		\quad
		J_2=2^{-n\lambda\theta}\sum_{\nu=0}^n 2^{\nu(r+\lambda)\theta}
				\biggl(\sum_{\mu=\nu}^\infty a_\mu^2\biggr)^{\theta/2},
	\end{displaymath}
	we estimate~$J_1$ and~$J_2$ from below and above.

	Let $0<\frac\theta2\le1$. Using Lemma~\ref{lm:jensen},
	changing the order os summation we get
	\begin{displaymath}
		J_1\le\sum_{\nu=n+1}^\infty 2^{\nu r\theta}
					\sum_{\mu=\nu}^\infty a_\mu^\theta
		=\sum_{\mu=n+1}^\infty a_\mu^\theta
				\sum_{\nu=n+1}^\mu 2^{\nu r\theta}.
	\end{displaymath}
	Therefrom, taking into consideration that $r\theta>0$ while
	computing the second sum we obtain
	\begin{displaymath}
		J_1\le\Cn\sum_{\mu=n+1}^\infty a_\mu^\theta2^{\mu r\theta}.
	\end{displaymath}

	Let $1\le\frac\theta2<\infty$ and $0<\varepsilon<r$. Applying
	H\"older inequality we have
	\begin{displaymath}
		A=\sum_{\mu=\nu}^\infty a_\mu^2
		\le\biggl(
				\sum_{\mu=\nu}^\infty a_\mu^\theta 2^{\mu\varepsilon\theta}
			\biggr)^{2/\theta}
			\biggl(
				\sum_{\mu=\nu}^\infty 2^{-2\mu\varepsilon\theta'}
			\biggr)^{1/\theta'},
	\end{displaymath}
	where is $\frac2\theta+\frac1{\theta'}=1$. Computing the
	second sum we obtain
	\begin{displaymath}
		A\le\frac\Cn{2^{2\varepsilon\nu}}
			\biggl(
				\sum_{\mu=\nu}^\infty a_\mu^\theta 2^{\mu\varepsilon\theta}
			\biggr)^{2/\theta}.
	\end{displaymath}
	Now we have
	\begin{multline*}
		J_1\le\Cn\sum_{\nu=n+1}^\infty 2^{\nu(r-\varepsilon)\theta}
				\sum_{\mu=\nu}^\infty a_\mu^\theta 2^{\mu\varepsilon\theta}\\
		=\lastC\sum_{\mu=n+1}^\infty a_\mu^\theta 2^{\mu\varepsilon\theta}
				\sum_{\nu=n+1}^\mu 2^{\nu(r-\varepsilon)\theta}
			\le\Cn\sum_{\mu=n+1}^\infty a_\mu^\theta 2^{\mu r\theta}.
	\end{multline*}

	This way, for $0<\theta<\infty$ we have
	\begin{displaymath}
		J_1\le\Cn\sum_{\mu=n+1}^\infty a_\mu^\theta 2^{\mu r\theta},
	\end{displaymath}
	where constant~$\lastC$ does not depend on~$n$.

	Now we estimate~$J_1$ from below.

	Let $1\le\frac\theta2<\infty$. Making use of Lemma~\ref{lm:jensen}
	we get
	\begin{displaymath}
		J_1\ge\sum_{\nu=n+1}^\infty 2^{\nu r\theta}
					\sum_{\mu=\nu}^\infty a_\mu^\theta
		=\sum_{\mu=n+1}^\infty a_\mu^\theta
				\sum_{\nu=n+1}^\mu 2^{\nu r\theta}.
	\end{displaymath}
	Computing the second sum we get
	\begin{displaymath}
		J_1\ge\Cn\sum_{\mu=n+1}^\infty a_\mu^\theta 2^{\mu r\theta}.
	\end{displaymath}

	Let $0<\frac\theta2\le1$ and $\varepsilon>0$. Applying H\"older
	inequality we have
	\begin{displaymath}
		\sum_{\mu=\nu}^\infty a_\mu^\theta 2^{-\mu\varepsilon\theta}
		\le\biggl(\sum_{\mu=\nu}^\infty a_\mu^2\biggr)^{\theta/2}
			\biggl(
				\sum_{\mu=\nu}^\infty 2^{-\mu\varepsilon\theta\theta'}
			\biggr)^{1/\theta'}
		\le\frac\Cn{2^{\nu\varepsilon\theta}}
			\biggl(\sum_{\mu=\nu}^\infty a_\mu^2\biggr)^{\theta/2},
	\end{displaymath}
	where is $\frac\theta2+\frac1{\theta'}=1$. The last estimate
	implies
	\begin{displaymath}
		J_1\ge\Cn\sum_{\nu=n+1}^\infty 2^{\nu(r+\varepsilon)\theta}
			\sum_{\mu=\nu}^\infty a_\mu^\theta 2^{-\mu\varepsilon\theta}.
	\end{displaymath}
	Changing the order of summation and then computing the second
	sum we obtain
	\begin{displaymath}
		J_1\ge\lastC
			\sum_{\mu=n+1}^\infty a_\mu^\theta 2^{-\mu\varepsilon\theta}
			\sum_{\nu=n+1}^\mu 2^{\nu(r+\varepsilon)\theta}
		\ge\Cn\sum_{\mu=n+1}^\infty a_\mu^\theta 2^{\mu r\theta},
	\end{displaymath}
	where constant~$\lastC$ does not depend on~$n$.

	Consequently, for every $0<\theta<\infty$ the following
	estimate holds
	\begin{equation}\label{eq:J1-lacunary}
		\Cn\sum_{\mu=n+1}^\infty a_\mu^\theta 2^{\mu r\theta}\le J_1
		\le\Cn\sum_{\mu=n+1}^\infty a_\mu^\theta 2^{\mu r\theta},
	\end{equation}
	where constants~$\prevC$ and~$\lastC$ do not depend on~$n$.

	Now we estimate~$J_2$. Obviously
	\begin{displaymath}
		J_2\ge2^{-n\lambda\theta}\sum_{\nu=0}^n 2^{\nu(r+\lambda)\theta}
				\biggl(\sum_{\mu=\nu}^n a_\mu^2\biggr)^{\theta/2}.
	\end{displaymath}

	Let $1\le\frac\theta2<\infty$. Applying Lemma~\ref{lm:jensen},
	changing the order of summation, and then computing the second
	sum we obtain
	\begin{multline*}
		J_2\ge2^{-n\lambda\theta}\sum_{\nu=0}^n 2^{\nu(r+\lambda)\theta}
			\sum_{\mu=\nu}^n a_\mu^\theta\\
		=2^{-n\lambda\theta}\sum_{\mu=0}^n a_\mu^\theta
			\sum_{\nu=0}^\mu 2^{\nu(r+\lambda)\theta}
		\ge\Cn2^{-n\lambda\theta}
			\sum_{\mu=0}^n a_\mu^\theta2^{\mu(r+\lambda)\theta}.
	\end{multline*}

	Let $0<\frac\theta2\le1$ and $\varepsilon>0$. Applying H\"older
	inequality we get
	\begin{displaymath}
		\sum_{\mu=\nu}^n a_\mu^\theta 2^{-\mu\varepsilon\theta}
		\le\biggl(\sum_{\mu=\nu}^n a_\mu^2\biggr)^{\theta/2}
			\biggl(
				\sum_{\mu=\nu}^n 2^{-\mu\varepsilon\theta\theta'}
			\biggr)^{1/\theta'}
		\le\frac\Cn{2^{\nu\varepsilon\theta}}
			\biggl(\sum_{\mu=\nu}^n a_\mu^2\biggr)^{\theta/2},
	\end{displaymath}
	where is $\frac\theta2+\frac1{\theta'}=1$. The last estimate
	implies
	\begin{displaymath}
		J_2\ge\Cn2^{-n\lambda\theta}
			\sum_{\nu=0}^n 2^{\nu(r+\lambda+\varepsilon)\theta}
			\sum_{\mu=\nu}^n a_\mu^\theta 2^{-\mu\varepsilon\theta}.
	\end{displaymath}
	Changing the order of summation and computing the second
	sum we have
	\begin{displaymath}
		J_2\ge\lastC2^{-n\lambda\theta}
			\sum_{\mu=0}^n a_\mu^\theta 2^{-\mu\varepsilon\theta}
			\sum_{\nu=0}^\mu 2^{\nu(r+\lambda+\varepsilon)\theta}
		\ge\Cn2^{-n\lambda\theta}
			\sum_{\mu=0}^n a_\mu^\theta 2^{\mu(r+\lambda)\theta}.
	\end{displaymath}

	Thus, for every $0<\theta<\infty$ holds
	\begin{equation}\label{eq:J2-ge}
		J_2\ge\Cn2^{-n\lambda\theta}
			\sum_{\mu=0}^n a_\mu^\theta 2^{\mu(r+\lambda)\theta}.
	\end{equation}

	Now we estimate~$J_2$ from above. Taking into consideration
	that $(r+\lambda)\theta>0$, we
	have
	%~\cite[p.~31]{bari:trigonometricheskie}
	\begin{multline}\label{eq:J2-le}
		J_2\le\Cn2^{-n\lambda\theta}\sum_{\nu=0}^n 2^{\nu(r+\lambda)\theta}
			\Bigg(
				\biggl(\sum_{\mu=\nu}^n a_\mu^2\biggr)^{\theta/2}
				+\biggl(\sum_{\mu=n+1}^\infty a_\mu^2\biggr)^{\theta/2}
			\Bigg)\\
		\le\Cn\Biggl(
				2^{-n\lambda\theta}\sum_{\nu=0}^n 2^{\nu(r+\lambda)\theta}
					\biggl(\sum_{\mu=\nu}^n a_\mu^2\biggr)^{\theta/2}
				+2^{nr\theta}
					\biggl(\sum_{\mu=n+1}^\infty a_\mu^2\biggr)^{\theta/2}
			\Biggr).
	\end{multline}
	Since
	\begin{displaymath}
		2^{nr\theta}
			\biggl(\sum_{\mu=n+1}^\infty a_\mu^2\biggr)^{\theta/2}
		\le\sum_{\mu=n+1}^\infty 2^{\nu r\theta}
			\biggl(\sum_{\mu=n+1}^\infty a_\mu^2\biggr)^{\theta/2}
		=J_1
	\end{displaymath}
	holds and an upper bound for~$J_1$ is already found, we
	estimate from above the expression
	\begin{displaymath}
		J_3=\sum_{\nu=0}^n 2^{\nu(r+\lambda)\theta}
			\biggl(\sum_{\mu=\nu}^n a_\mu^2\biggr)^{\theta/2}.
	\end{displaymath}

	Let $0<\frac\theta2\le1$. Applying Lemma~\ref{lm:jensen}
	we obtain
	\begin{displaymath}
		J_3\le\sum_{\nu=0}^n 2^{\nu(r+\lambda)\theta}
			\sum_{\mu=\nu}^n a_\mu^\theta
		=\sum_{\mu=0}^n a_\mu^\theta
			\sum_{\nu=0}^\mu 2^{\nu(r+\lambda)\theta}
		\le\Cn\sum_{\mu=0}^n a_\mu^\theta 2^{\mu(r+\lambda)\theta}.
	\end{displaymath}

	Let $1\le\frac\theta2<\infty$ and $0<\varepsilon<r+\lambda$.
	Then applying H\"older inequality we have
	\begin{displaymath}
		\sum_{\mu=\nu}^n a_\mu^2
		\le\biggl(
				\sum_{\mu=\nu}^n a_\mu^\theta 2^{\mu\varepsilon\theta}
			\biggr)^{2/\theta}
			\biggl(
				\sum_{\mu=\nu}^n 2^{-2\mu\varepsilon\theta'}
			\biggr)^{1/\theta'},
	\end{displaymath}
	where is $\frac2\theta+\frac1{\theta'}=1$. Using the last
	estimate we get
	\begin{displaymath}
		J_3\le\sum_{\nu=0}^n 2^{\nu(r+\lambda)\theta}
			\biggl(
				\sum_{\mu=\nu}^\infty 2^{-2\mu\varepsilon\theta'}
			\biggr)^{\frac\theta{2\theta'}}
			\sum_{\mu=\nu}^n a_\mu^\theta 2^{\mu\varepsilon\theta}
		\le\Cn\sum_{\nu=0}^n 2^{\nu(r+\lambda-\varepsilon)\theta}
			\sum_{\mu=\nu}^n a_\mu^\theta 2^{\mu\varepsilon\theta}.
	\end{displaymath}
	Changing the order of summation and computing the second
	sum we obtain
	\begin{displaymath}
		J_3\le\lastC\sum_{\mu=0}^n a_\mu^\theta 2^{\mu\varepsilon\theta}
			\sum_{\nu=0}^\mu 2^{\nu(r+\lambda-\varepsilon)\theta}
		\le\Cn\sum_{\mu=0}^n a_\mu^\theta 2^{\mu(r+\lambda)\theta}.
	\end{displaymath}

	Therefore, for every $0<\theta<\infty$ the following
	estimate holds
	\begin{displaymath}
		J_3\le\Cn\sum_{\mu=0}^n a_\mu^\theta 2^{\mu(r+\lambda)\theta}.
	\end{displaymath}

	Now making use of inequalities~\eqref{eq:J2-le}
	and~\eqref{eq:J1-lacunary} we have
	\begin{displaymath}
		J_2\le\Cn\biggl(
				2^{-n\lambda\theta}
					\sum_{\mu=0}^n a_\mu^\theta 2^{\mu(r+\lambda)\theta}
				+\sum_{\mu=n+1}^\infty a_\mu^\theta 2^{\mu r\theta}
			\biggr).
	\end{displaymath}

	This way, inequalities~\eqref{eq:J1-lacunary}, \eqref{eq:J2-ge} and
	the last inequality imply the estimate
	\begin{multline*}
		\Cn\biggl(
				\sum_{\mu=n+1}^\infty a_\mu^\theta 2^{\mu r\theta}
				+2^{-n\lambda\theta}
					\sum_{\mu=0}^n a_\mu^\theta 2^{\mu(r+\lambda)\theta}
			\biggr)\le J_1+J_2\\
		\le\Cn\biggl(
				\sum_{\mu=n+1}^\infty a_\mu^\theta 2^{\mu r\theta}
				+2^{-n\lambda\theta}
					\sum_{\mu=0}^n a_\mu^\theta 2^{\mu(r+\lambda)\theta}
			\biggr),
	\end{multline*}
	where constants~$\prevC$ and~$\lastC$ do not depend on~$n$.
	Hence, considering the condition~\eqref{eq:condition} we
	conclude that condition $f\in\Np$ is equivalent to the
	condition
	\begin{equation}\label{eq:An}
		A_n=\sum_{\mu=n+1}^\infty a_\mu^\theta 2^{\mu r\theta}
				+2^{-n\lambda\theta}
					\sum_{\mu=0}^n a_\mu^\theta 2^{\mu(r+\lambda)\theta}
		\le\Cn\varphi\left(\frac1{2^n}\right)^\theta,
	\end{equation}
	where constant~$\lastC$ does not depend on~$n$.

	We put
	\begin{displaymath}
		D_m=\sum_{\nu=m+1}^\infty \lambda_\nu^\theta \nu^{r\theta}
			+m^{-\lambda\theta}
				\sum_{\nu=1}^m \lambda_\nu^\theta \nu^{(r+\lambda)\theta}.
	\end{displaymath}
	For given~$m$ we choose the positive integer~$n$ such that
	$2^n\le m+1<2^{n+1}$.

	First we consider the case $2^n<m+1<2^{n+1}$. We have
	\begin{multline*}
		D_m=\sum_{\nu=2^{n+1}}^\infty \lambda_\nu^\theta \nu^{r\theta}
			+\sum_{\nu=m+1}^{2^{n+1}-1} \lambda_\nu^\theta \nu^{r\theta}
			+m^{-\lambda\theta}
				\sum_{\nu=1}^{2^n-1}
					\lambda_\nu^\theta \nu^{(r+\lambda)\theta}\\
		+m^{-\lambda\theta}
			\sum_{\nu=2^n}^m \lambda_\nu^\theta \nu^{(r+\lambda)\theta}.
	\end{multline*}
	Since $\lambda_\nu=0$ for $\nu\ne2^\mu$, we get
	\begin{multline*}
		D_m=\sum_{\nu=2^{n+1}}^\infty \lambda_\nu^\theta \nu^{r\theta}
			+m^{-\lambda\theta}
				\sum_{\nu=1}^{2^n-1}
					\lambda_\nu^\theta \nu^{(r+\lambda)\theta}
			+m^{-\lambda\theta}
				\lambda_{2^n}^\theta 2^{n(r+\lambda)\theta}\\
		=\sum_{\mu=n+1}^\infty \sum_{\nu=2^\mu}^{2^{\mu+1}-1}
				\lambda_\nu^\theta \nu^{r\theta}
			+m^{-\lambda\theta}\sum_{\mu=0}^{n-1}
				\sum_{\nu=2^\mu}^{2^{\mu+1}-1}
				\lambda_\nu^\theta \nu^{(r+\lambda)\theta}
			+m^{-\lambda\theta} \lambda_{2^n}^\theta 2^{n(r+\lambda)\theta}\\
		=\sum_{\mu=n+1}^\infty \lambda_{2^\mu}^\theta 2^{\mu r\theta}
			+m^{-\lambda\theta}\sum_{\mu=0}^{n-1}
				\lambda_{2^\mu}^\theta 2^{\mu(r+\lambda)\theta}
			+m^{-\lambda\theta} \lambda_{2^n}^\theta 2^{n(r+\lambda)\theta}.
	\end{multline*}
	Further, since $\lambda_{2^\mu}=a_\mu$, we get
	\begin{displaymath}
		D_m=\sum_{\mu=n+1}^\infty a_\mu^\theta 2^{\mu r\theta}
			+m^{-\lambda\theta}
				\sum_{\mu=0}^n a_\mu^\theta 2^{\mu(r+\lambda)\theta}.
	\end{displaymath}
	Hence, for $2^n<m+1<2^{n+1}$ we obtain
	\begin{multline*}
		\Cn\biggl(
				\sum_{\mu=n+1}^\infty a_\mu^\theta 2^{\mu r\theta}
				+2^{-n\lambda\theta}
					\sum_{\mu=0}^n a_\mu^\theta 2^{\mu(r+\lambda)\theta}
			\biggr)\le D_m\\
		\le\Cn\biggl(
				\sum_{\mu=n+1}^\infty a_\mu^\theta 2^{\mu r\theta}
				+2^{-n\lambda\theta}
					\sum_{\mu=0}^n a_\mu^\theta 2^{\mu(r+\lambda)\theta}
			\biggr),
	\end{multline*}
	where constants~$\prevC$ and~$\lastC$ do not depend on~$m$
	and~$n$.

	Let us assume now that $m+1=2^n$. In an analogous way we have
	\begin{multline*}
		D_m=\sum_{\nu=2^n}^\infty \lambda_\nu^\theta \nu^{r\theta}
				+2^{-n\lambda\theta}
					\sum_{\nu=1}^{2^n-1} \lambda_\nu^\theta
					  \nu^{(r+\lambda)\theta}\\
		=\sum_{\mu=n}^\infty a_\mu^\theta 2^{\mu r\theta}
				+2^{-n\lambda\theta}
					\sum_{\mu=0}^{n-1} a_\mu^\theta 2^{\mu(r+\lambda)\theta}\\
		=\sum_{\mu=n+1}^\infty a_\mu^\theta 2^{\mu r\theta}
				+2^{-n\lambda\theta}
					\sum_{\mu=0}^n a_\mu^\theta 2^{\mu(r+\lambda)\theta}
			=A_n.
	\end{multline*}

	Thus, for $2^n\le m+1<2^{n+1}$ the following estimate holds
	\begin{displaymath}
		\Cn A_n\le D_m\le\Cn A_n,
	\end{displaymath}
	where constants~$\prevC$ and~$\lastC$ do not depend on~$m$
	and~$n$. Hence, considering the condition~\eqref{eq:An}
	we conclude that condition $f\in\Np$ is equivalent to the
	condition
	\begin{equation}\label{eq:Dm}
		D_m\le\Cn\varphi\left(\frac1{2^n}\right)^\theta,
	\end{equation}
	where constant~$\lastC$ does not depend on~$m$ and~$n$.

	Since $\frac1{2^n}<\frac2{m+1}<\frac2m$, we get
	\begin{displaymath}
		\varphi\left(\frac1{2^n}\right)\le\Cn\varphi\left(\frac2{m+1}\right)
		\le\Cn\varphi\left(\frac2m\right),
	\end{displaymath}
	where constant~$\lastC$ does not depend on~$m$ and~$n$;
	and since $\frac1{2^n}\ge\frac1{m+1}\ge\frac1{2m}$ we get
	\begin{displaymath}
		\varphi\left(\frac1{2^n}\right)\ge\Cn\varphi\left(\frac1{2m}\right)
		\ge\Cn\varphi\left(\frac1m\right),
	\end{displaymath}
	where constant~$\lastC$ does not depend on~$m$ and~$n$.
	This way, condition~\eqref{eq:Dm} is equivalent to the
	condition
	\begin{displaymath}
		D_m\le\Cn\varphi\left(\frac1m\right)^\theta,
	\end{displaymath}
	where constant~$\lastC$ does not depend on~$m$.

	This completes the proof of Theorem~\ref{th:Np-lacunary}.
\end{proof}

\begin{remark}
	Notice that another way of proving Theorems~\ref{th:Np-monotone}
	and~\ref{th:Np-lacunary}
	is presented in~\cite{tikhonov:etna-05}.
	Our approach here is similar to that used in~\cite{b-berisha:math-99}.
\end{remark}

\bibliographystyle{amsplain}
\bibliography{maths}

\providecommand\cprime{'}
\providecommand{\bysame}{\leavevmode\hbox to3em{\hrulefill}\thinspace}
\providecommand{\MR}{\relax\ifhmode\unskip\space\fi MR }
% \MRhref is called by the amsart/book/proc definition of \MR.
\providecommand{\MRhref}[2]{%
  \href{http://www.ams.org/mathscinet-getitem?mr=#1}{#2}
}
\providecommand{\href}[2]{#2}
\begin{thebibliography}{10}

\bibitem{b-berisha:math-99}
M.~Q. Berisha and F.~M. Berisha, \emph{On monotone {F}ourier coefficients of a
  function belonging to {N}ikol'ski\u{\i}-{B}esov classes}, Math. Montisnigri
  \textbf{10} (1999), 5--20 (English). \MR{1723439 (2000j:42003)}

\bibitem{besov-i-n:integral}
O.~V. Besov, V.~P. Il{\cprime}in, and S.~M. Nikol{\cprime}ski{\u\i},
  \emph{Integral representations of functions and imbedding theorems. {V}ol.
  {II}}, V. H. Winston \& Sons, Washington, D.C.; Halsted Press [John Wiley \&\
  Sons], New York-Toronto, Ont.-London, 1979, Scripta Series in Mathematics,
  Edited by Mitchell H. Taibleson. \MR{521808}

\bibitem{copson:london-28}
E.~T. Copson, \emph{Note on {S}eries of {P}ositive {T}erms}, J. London Math.
  Soc. \textbf{S1-3} (1928), no.~1, 49. \MR{1574443}

\bibitem{hardy-l-p:inequalities}
G.~H. Hardy, J.~E. Littlewood, and G.~P{\'o}lya, \emph{Inequalities}, Cambridge
  Mathematical Library, Cambridge University Press, Cambridge, 1988, Reprint of
  the 1952 edition. \MR{89d:26016}

\bibitem{konyushkov:izv-57}
A.~A. Konyushkov, \emph{O klassakh lipshitsa}, Izv. Akad. Nauk SSSR. Ser. Mat.
  \textbf{21} (1957), no.~3, 423--448.

\bibitem{lakovic:mat-87}
B.~Lakovi\'c, \emph{Ob odnom klasse funktsi{\u\i}}, Mat. Vesnik \textbf{39}
  (1987), no.~4, 405--415. \MR{89h:41062}

\bibitem{leindler:acta-70}
L.~Leindler, \emph{Generalization of inequalities of {H}ardy and {L}ittlewood},
  Acta Sci. Math. (Szeged) \textbf{31} (1970), 279--285. \MR{0277676 (43
  \#3409)}

\bibitem{leindler:jipam-00}
\bysame, \emph{Power-monotone sequences and {F}ourier series with positive
  coefficients}, JIPAM. J. Inequal. Pure Appl. Math. \textbf{1} (2000), no.~1,
  Article 1, 10. \MR{1756652 (2001d:42004)}

\bibitem{liflyand-t:math-nachr-11}
E.~Liflyand and S.~Tikhonov, \emph{A concept of general monotonicity and
  applications}, Math. Nachr. \textbf{284} (2011), no.~8-9, 1083--1098.
  \MR{2839773}

\bibitem{p-berisha-b-k:mia-15}
M.~K. Potapov, F.~M. Berisha, N.~Sh. Berisha, and R.~Kadriu, \emph{Some reverse
  $l_p$-type inequalities involving certain quasi monotone sequences}, Math.
  Inequal. Appl. \textbf{18} (2015), no.~4, 1245--1252 (English). \MR{3414593}

\bibitem{potapov-b:publ-79}
M.~K. Potapov and M.~Q. Berisha, \emph{Moduli of smoothness and the {F}ourier
  coefficients of periodic functions of one variable}, Publ. Inst. Math.
  (Beograd) (N.S.) \textbf{26(40)} (1979), 215--228. \MR{81e:42009}

\bibitem{tikhonov:etna-05}
S.~Tikhonov, \emph{Characteristics of {B}esov-{N}ikol\cprime ski\u\i\ class of
  functions}, Electron.\ Trans.\ Numer.\ Anal. \textbf{19} (2005), 94--104.
  \MR{2149272 (2006d:42002)}

\bibitem{tikhonov:analysis-07}
\bysame, \emph{Trigonometric series with general monotone coefficients}, J.
  Math. Anal. Appl. \textbf{326} (2007), no.~1, 721--735. \MR{2277815
  (2007h:42010)}

\bibitem{tikhonov-z:springer-14}
S.~Tikhonov and M.~Zeltser, \emph{Weak {M}onotonicity {C}oncept and {I}ts
  {A}pplications}, Fourier {A}nalysis, Trends in {M}athematics, Springer
  International Publishing Switzerland, 2014, pp.~357--374.

\bibitem{timan:approximation}
A.~F. Timan, \emph{Theory of approximation of functions of a real variable},
  Translated from the Russian by J. Berry. English translation edited and
  editorial preface by J. Cossar. International Series of Monographs in Pure
  and Applied Mathematics, Vol. 34, A Pergamon Press Book. The Macmillan Co.,
  New York, 1963. \MR{0192238 (33 \#465)}

\bibitem{zygmund:trigonometric}
A.~Zygmund, \emph{Trigonometric series}, Two volumes, Cambridge University
  Press, Cambridge, 1988, (Russian translation, Gosudarstv. Izdat.
  Inostranno\u{\i} Literatury, Moscow, 1965). \MR{89c:42001}

\end{thebibliography}

\end{document}